\theoremstyle{plain}
 \newtheorem{thm}{Theorem}[section]
 \newtheorem{lem}{Lemma}[section]
 \newtheorem{cor}{Corollary}[section]
\theoremstyle{definition}
\theoremstyle{remark}
 \numberwithin{equation}{section}
\renewcommand{\leq}{\leqslant}
\renewcommand{\geq}{\geqslant}
\title[Riemann hypothesis]{A NEW REASON FOR DOUBTING THE RIEMANN HYPOTHESIS}
\subjclass[2010]{11M26}
\keywords{Riemann hypothesis, Distribution of zeros, Hardy's function.}
\author[Blanc]{\bfseries Philippe Blanc}
\address{
\hspace{-0.6 cm}D\'{e}partement des technologies industrielles \\ 
Haute \'{E}cole d'Ing\'{e}nierie et de Gestion  \\ 
CH-1400 Yverdon-les-Bains\\
Switzerland}
\email{philippe.blanc@heig-vd.ch}
\begin{document}
\vspace{18mm}
\setcounter{page}{1}
\thispagestyle{empty}

\begin{abstract}
We make plausible the existence of counterexamples to the Riemann hypothesis located in the neighbourhood of unusually large peaks of~$\vert \zeta \vert$. The main ingredient in our argument is an identity which links the zeros of a function $f$ defined on the interval $[-a,a]$ and the values of its derivatives of odd order at $\pm a$. 
\end{abstract}

\maketitle

\section{Introduction} 
\label{intro}
The Riemann hypothesis is the conjecture that all nontrivial zeros of the function~$\zeta$, defined for $\Re(s)>1$ by
\[
\zeta(s)=\sum_{n=1}^{\infty}\frac{1}{n^s}
\]
and on $\mathbb{C}\setminus \{1\}$ by analytic continuation, have real parts equal to $1/2$. The interested reader will find the history of the Riemann hypothesis in the book of Edwards \cite{Edwards} and, in the book of Borwein et al.\,\,\cite{Borwein}, the papers of Bombieri and Conrey in its favour and some reasons for doubting in an exciting paper of Ivi\'c.\\
\indent  The goal of this research, which began after having read Ivi\'c's paper, is to make plausible the existence of counterexamples to the Riemann hypothesis located in the neighbourhood  of unusually large peaks of $\vert \zeta \vert$, precisely where they are expected if they exist.\\
In this paper, we will make use of the Hardy $Z$ function defined by
\[
Z(t)=e^{i\theta(t)}\zeta \left(\frac{1}{2}+it\right)
\]
where
\[
\theta(t)=\arg\left( \pi^{-i\frac{t}{2}}\;\Gamma\left (\frac{1}{4}+i\frac{t}{2}\right)\right)
\]
and the argument is defined by continuous variation of $t$ starting with the value $0$ at $t=0$. It can be shown \cite{Ivic} that
\[
\theta(t) = \frac{t}{2}\log \frac{t}{2\pi} - \frac{t}{2} - \frac{\pi}{8}+
O\left(\frac{1}{t}\right).
\] The real zeros of $Z$ coincide with the zeros of $\zeta $ located on the line of real part $1/2$. If the Riemann hypothesis is true, then the number of zeros of $Z$ in the interval $ ( 0, t] $ is given by \cite{Ivic}
\begin{equation} \label{eq:N}
N(t)=\frac{1}{\pi}\theta(t)+1+S(t)
\end{equation}
where $S(t)=\frac{1}{\pi}\arg \zeta(1/2+it)$ if $t$ is not a zero of $Z$ and $\arg \zeta(1/2+it)$ is defined by continuous variation along the straight lines joining $2$, $2+it$ and $1/2+it$ starting with the initial value $\arg \zeta(2)=0$. If $t$ is a zero of $Z$ we set $S(t)=\lim_{\epsilon \to 0_+}S(t+\epsilon)$. Further, we introduce the function $S_1(t)=\int_0^tS(u)du$.\\
\indent The notations used in this paper are standard : $\lfloor x\rfloor$ and $\lceil x \rceil$ stand for the usual floor and ceiling functions and 
 $\{x\}:=x-\lfloor x\rfloor$. The writing $f(x)\ll g(x)$ means there exist $x_0$ and a positive constant $C$ such that $\left |f(x)\right |\leq Cg(x)$ for $x\geq x_0$ where $g(x)$ is positive for  $x\geq x_0$, $f(x)\gg g(x)$ is the same as $g(x)\ll f(x)$ and $f(x)\asymp g(x)$ is equivalent to $g(x)\ll f(x)\ll g(x)$. The symbol $f(x)=\Omega_{\pm}(g(x))$ means that $\limsup_{x\to \infty}f(x)/g(x)>0$ and $\liminf_{x\to \infty}f(x)/g(x)<0$.
 Bernoulli polynomials of degree $n$ are defined by
 \[
\int\limits_{x}^{x+1}B_{n}(t)\,dt=x^n\!.
 \]
Finally, for $r\geq 1$ and $t\geq t_r$ we set $log_1 t\equiv \log t$ and $\log_r t=\log(\log_{r-1}t)$ for $r\geq 2$.\\
\indent The content of this paper is as follows : In Section 2 we present our argument which disfavours the Riemann hypothesis. Statements and proofs of some technical results are to be found in Section 3.
\section{On the possible existence of counterexamples to the Riemann hypothesis}
Let $T_M>0$ be such that $Z$ reaches a maximum at $T_M$ and let $T\pm a$, located as indicated in the Figure 1, be such that $Z(T\pm a )\neq 0$ and $Z'$ reaches a local maximum at both $T+a$ and $T-a$. Further, let $\gamma_k$ be the zeros of~$Z$ numbered in such a way that $\gamma_{-2}< T-a<\gamma_{0}\leqslant\ldots\leqslant\gamma_n<T+a$ and $n$ is odd.\newpage
\vspace{8cm}
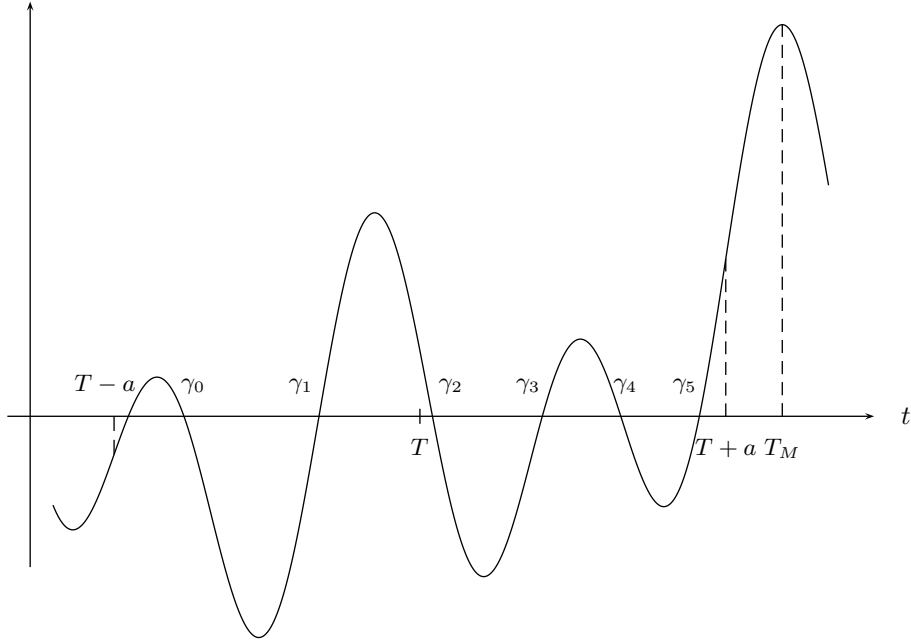
\begin{figure}[h!]\label{fig:situation}
\begin{center}
\begin{pspicture}(10.5,7)
 \psset{linewidth=0.5pt,xunit=0.6 cm,yunit=1 cm}
 \readdata{\mydataexp}{sit.dat}
    \psline{->}(-1,0)(18,0)
    \psline{->}(-0.5,-2)(-0.5,5.5)
    \dataplot[plotstyle=curve]
      {\mydataexp}
       \psline[linestyle=dashed](1.339,0)(1.339,-0.507)
        \uput{0.3}[90](1.1393,0){\small $T-a$}
      \psline[linestyle=dashed](14.747,0)(14.747,2.087)
        \uput{0.3}[270](14.747,0){\small $T+a$}
        \uput{0.3}[270]( 8.043,0){\small $T$}
        \psline(8.043,-0.1)(8.043,0.1)
         \uput{0.3}[90](3.07,0){\small $\gamma_{0}$}
           \uput{0.3}[90](5.431,0){\small $\gamma_{1}$}
           \uput{0.3}[90](8.718,0){\small $\gamma_{2}$}
             \uput{0.3}[90](10.415,0){\small $\gamma_{3}$}
         \uput{0.3}[90](12.546,0){\small $\gamma_4$}
         \uput{0.3}[90](13.846,0){\small $\gamma_5$}
       \psline[linestyle=dashed](15.986,0)(15.986,5.193)
         \uput{0.3}[270](15.986,0){\small $T_M$}
    \uput[0](18.3,0){$t$}
  \end{pspicture}
  \vspace{3cm}
    \caption{Positions of $T_M$, $T$ and $T\pm a$ with $n=5$.}
  \end{center}
  \end{figure}
  \noindent
As a consequence of Theorem \ref{Main} and Lemma \ref{Psinorm} there exist numbers $\alpha_{2k-1}^{\pm}$ depending on $T$, $a$ and $\gamma_{0},\ldots,\gamma_n$ such that for $K\geqslant n+1$ we have
\begin{equation}\label{eq:main}
\left|\;\sum_{k=1}^{K}\alpha_{2k-1}^+Z^{(2k-1)}(T+a)+\sum_{k=1}^{K}\alpha_{2k-1}^-Z^{(2k-1)}(T-a)\;\right|
\end{equation}
\[
\leqslant 2^{n-\frac{1}{2}}c_{2K-1,n}\left(\frac{2a}{n\pi}\right)^{2K}\max_{T-a\leqslant t \leqslant T+a}\vert Z^{(2K)}(t)\vert
\]
where $c_{2K-1,n}$ is defined in (\ref{eq:constc}).
Now, we assume that $T$ is sufficiently large and $a$ is small enough so that there exists  $K\geqslant n+1$ where   $K\!\in\left.\left(\frac{1}{4}\theta'(T),\frac{7}{8}\theta'(T)\right]\right.$ and  we define the numbers
\[
\beta_{2k-1}^{\pm}= (-1)^{k+1}\alpha_{2k-1}^{\pm}\theta'(T)^{2k-1}\,,\hspace{1cm}d_{2k-1}^{\pm}=(-1)^{k+1}\frac{Z^{(2k-1)}(T\pm a)}{\theta'(T)^{2k-1}}
\]
and
\[
e_{2K,n}=2^{n-\frac{1}{2}}c_{2K-1,n}\left(\frac{2a\theta'(T)}{n\pi}\right)^{2K}\min\left(\log T,\,3\, \zeta\left(\frac{1}{2}+\frac{2K}{\theta'(T)}\right)\right).
\]
Observe that the terms $\theta'(T)^{2k-1}$ and $1/\theta'(T)^{2k-1}$ are just scaling factors. 
Thanks to Lemma \ref{Psiprop} and since $n$ is odd, the numbers $\beta_{2k-1}^{\pm}$ are positive and  inequality (\ref{eq:main}) together with Corollary \ref{boundZ2k} imply that
\begin{equation}\label{eq:main2}
\left|\;\sum_{k=1}^{K}\beta_{2k-1}^+d_{2k-1}^{+}+\sum_{k=1}^{K}\beta_{2k-1}^-d_{2k-1}^{-}\;\right|\leqslant e_{2K,n}.
\end{equation}
\indent As examples, we compute the numbers $\beta_{2k-1}^{\pm}$, $d_{2k-1}^{\pm}$ and $e_{2K,n}$ for a couple of large values of $T$ for which $Z$ is computable and then we will see what is going on for very large values of $T$ if the Riemann hypothesis is true.\\ In these examples, we use Theorem \ref{boundZk} to compute the derivatives and, in the second one, we use the data of Bober and Hiary  \cite{Hiary}   together with finite differences to compute low order derivatives. In both cases, we neglect the error term.\\
As a first example, we choose 
\begin{equation}\label{T1}
T= 7.1934200352263711248\times 10^{14},
\end{equation}
$K=\left\lceil\frac{1}{4}\theta'(T)\right\rceil=5$ and $a=0.38644$ so that $n=3$ and $T_M=T+0.56179$, a value for which $Z(T_M)=738.756$. We get $e_{10,3}=2940.12$ and  the values of  $\beta_{2k-1}^{\pm}$ and $d_{2k-1}^{\pm}$ are collected in Table 1.
\begin{table}[htb]\label{tab:D} 
\centering\begin{tabular}{|c|r|r|r|r|r}
\hline
 $2k-1$ & $\beta_{2k-1}^+ $ & $d_{2k-1}^+ $ & $\beta_{2k-1}^- $ & $d_{2k-1}^- $\\
 \hline
1     & 0.20  & 171.73  & 1.13  &0.32    \\
3     & 0.85  &  45.20  & 2.60  &0.60    \\
5     & 2.25  &  17.90  & 4.46  &1.06    \\
7     & 4.86  &   8.99  & 7.31  &1.55    \\
9     & 9.53  &   5.26  & 12.08 &1.94    \\
\hline
\end{tabular}
\vspace{0.5cm}
\caption{Table of $\beta_{2k-1}^{\pm}$ and $d_{2k-1}^{\pm}$ for $T$ given by (\ref{T1}).}
\end{table}

\noindent For the second one, we choose 
\begin{equation}\label{T2}
T= 3.924676458989430915525116928410362023\times 10^{31},
\end{equation}
$K=\left\lceil\frac{1}{4}\theta'(T)\right\rceil=9$ and $a=0.33794$ so that $n=7$ and $T_M=T+0.43039$, a value for which $Z(T_M)=16244.865$, the largest known value of $Z$.  We get $e_{18,7}=100375.07$ and  the values of  $\beta_{2k-1}^{\pm}$ and $d_{2k-1}^{\pm}$ are collected in Table~\ref{tab:B}.\\ 
\begin{table}[htb]\label{tab:B}
\centering\begin{tabular}{|r|r|r|r|r|r}
\hline
 $2k-1$ & $\beta_{2k-1}^+ $ & $d_{2k-1}^+ $ & $\beta_{2k-1}^- $ & $d_{2k-1}^- $\\
 \hline
1    &   0.10  & 3209.26   &     8.79  &   0.24    \\
3    &   0.48  &  622.68   &    16.76  &   0.38    \\
5    &   1.30  &  180.96   &    21.37  &   0.53    \\
7    &   2.63  &   66.25   &    23.76  &   0.71    \\
9    &   4.52  &   28.01   &    25.25  &   0.93    \\
11   &   6.99  &   12.92   &    26.63  &   1.19    \\
13   &  10.05  &    6.21   &    28.33  &   1.48    \\
15   &  13.74  &    2.94   &    30.60  &   1.78    \\
17   &  18.12  &    1.23   &    33.57  &   2.07    \\
\hline
\end{tabular}
\vspace{0.2cm}
\caption{Table of $\beta_{2k-1}^{\pm}$ and $d_{2k-1}^{\pm}$ for $T$ given by (\ref{T2}).}
\end{table}
\indent
\hspace{-0.3 cm}We now turn to the key of our argument. We observe, in Tables 1 and 2, that $\beta_1^+d_1^+$ is much smaller than $e_{10,3}$ and $e_{18,7}$. Surprisingly, under the Riemann hypothesis and some technical conditions, if $S(T+a)-S(T-a)$ is large then not only $\beta_1^+$ is large, but numerical computations strongly suggest that $\beta_1^+d_1^+$ can be much larger than~$e_{2K,n}$.\\ 
\indent
From now we assume that the Riemann hypothesis is true and for the moment we relax the condition that $Z'$ be maximum at $T\pm a$. Concerning the order of $S(t)$, it is known \cite{Carneiro}, \cite{Montgomery}, that 
\[
\vert S(t)\vert\leqslant \left( \frac{1}{4}+o(1)\right)\frac{\log t}{\log_2 t}\,,\hspace{1cm}  S(t)=\Omega_{\pm}\left( \sqrt{\frac{\log t}{\log_2 t}}\,\;\right)
\]
and a conjecture of Farmer, Gonek and Hughes \cite{Farmer} asserts that
\begin{equation}\label{Farmer}
\limsup_{t\to \infty}\frac{S(t)}{\sqrt{\log t\,\log_2 t}}=\frac{1}{\pi \sqrt{2}}\,.
\end{equation}
For a further use we introduce the functions
\[
A_1(t)= \frac{1}{4}\frac{\log t}{\log_2 t}\,,\hspace{1cm}A_2(t)= \frac{\sqrt{\log t\,\log_2 t}}{\pi \sqrt{2}}\,,\hspace{1cm}A_3(t)=\sqrt{\frac{\log t}{\log_2 t}}\,.
\]
Let $M_+(t)=\max_{0\leqslant u\leqslant t}S(u)$ and let $c\!\in (0,1)$. Under the assumptions of Lemma~\ref{DeltaS}, there exist arbitrary large $T$ and $a\asymp (\log_2 T)^{-1}\log_3 T$ such that $S(T+a)\geqslant c\,M_+(T+a)$ and
\[
\beta_1^+\geqslant e^{\textstyle{
 cM_+(T+a)\left(1+O\left(\frac{1}{\log_3T}\right)\right)}}
\]
\\
and moreover, if $M_+(T+a)=A_1(T+a)$ and  $K\!\in\left.\left(\frac{1}{4}\theta'(T),\frac{7}{8}\theta'(T)\right]\right.$, then
\[
 e_{2K,n} \leqslant e^{\textstyle{ -\frac{2\pi c K}{\log_3 T}\left(1+O\left(\frac{1}{\log_3T}\right)\right)}}.
\]
This shows that when $M(T+a)=A_1(T+a)$ then $\beta_1^+$ is large and $ e_{2K,n}$ is small when $T$ is large. Our lower bound for $\beta_1^+$ does not really take into account the distribution of zeros, and proving that $\beta_1^+d_1^+$ can be much larger that $e_{2K,n}$ if $M_+(T+a)=A_2(T+a)$ or $A_3(T+a)$ requires to solve the minimization problem~(P) defined in Section~3. Its solution leads to 
\begin{equation}\label{phic}
\beta_1^+\geqslant \phi_c(T,a,M_+(T+a),r_1(T,a),r_2(T,a))
\end{equation}
where $r_1(T,a)$ and $r_2(T,a)$ are $O((\log_3 T)^{-1})$ and the function $\phi_c$ has to be computed numerically.\\
We now seek a lower bound for $d_1^+$.
Concerning the order of $\zeta(1/2+it)$, a conjecture~\cite{Farmer}, related to (\ref{Farmer}), asserts that there exist arbitrary large values of $t$ such that
\begin{equation}\label{FarmerB}
\vert \zeta(\frac{1}{2}+it)\vert  \geqslant e^{\textstyle{(1+o(1))\sqrt{\frac{1}{2}\log t\log_2 t}}}
\end{equation}
whereas Bondarenko and Seip \cite{Bondarenko} show unconditionally that
\begin{equation}\label{Bondarenko}
\vert \zeta(\frac{1}{2}+it)\vert \geqslant e^{\textstyle{(1+o(1))\sqrt{\frac{\log t\,\log_3 t}{2\log_2t}}}}
\end{equation} 
for some arbitrary large $t$.\\
We quote Bober and Hiary \cite{Bober}:
"It is always the case in our computations that when $\zeta(1/2+it)$ is very large there is  a  large  gap  between  the  zeros  around  the  large  value. And  it  seems  that  to
compensate for this large gap the zeros nearby get "pushed" to the left and right.
A typical trend in the large values that we have found is that
$S(t)$ is particularly
large and positive before the large value and large and negative afterwards."\\
Therefore, it is reasonable to expect that for $c \in (0,1)$ small enough, there exist arbitrary large  $T$, $a$ and $T_M$ as defined in the beginning of this section such that (\ref{phic}) holds and $Z(T_M)$ is large. Further, assuming that $S(T_M)\geqslant -M_+(T+a)$ we get, using (\ref{eq:N})
\[
d_1^+=\frac{Z'(T+a)}{\theta'(T)}\geqslant \frac{Z(T_M)}{\theta'(T)(T_M-\gamma_n)}\geqslant \frac{Z(T_M)}{2\pi M_+(T+a)}.
\]
Now it gets interesting. We compute our different bounds for $c=1/4$, $T=10^{20000}$,  
$a=(\log_2 T)^{-1}\log_3 T=0.22107$ and $K=\lfloor\frac{7}{8}\theta'(T)\rfloor =20146$ without, of course, taking into account the $O(\cdot)$ and $o(\cdot)$.\\
If $M_+(T+a)=A_2(T+a)$, it is reasonable to use the bound (\ref{FarmerB}) and since $\vert Z(t)\vert =\vert \zeta(1/2+it)\vert$, this leads to
\[
\beta_1^+\geqslant 8.55\times 10^{864},\hspace{0.5 cm}
d_1^+\geqslant 8.86\times 10^{212}\hspace{0.5 cm}\mbox{and}\hspace{0.5 cm}e_{2K,n}= 5.18\times 10^{775}
\]
so that $\beta_1^+d_1^+\geqslant 7.57\times 10^{1077}$.\\
If $M_+(T+a)=A_3(T+a)$, we use (\ref{Bondarenko}) to get
\[
\beta_1^+\geqslant 2.88\times 10^{874},\hspace{0.5 cm}
d_1^+\geqslant 2.34\times 10^{28}\hspace{0.5 cm}\mbox{and}\hspace{0.5 cm}e_{2K,n} = 9.97\times 10^{896}
\]
so that $\beta_1^+d_1^+\geqslant 6.73\times 10^{902}$.\\
As a consequence of Lemma \ref{opt}, the distribution of zeros of $Z$ which gives rise to the lower bound (\ref{phic}) is not the one mentioned in the previous citation. This suggests that $\beta_1^+$ is probably much larger than the right hand side of (\ref{phic}).\\
For very large $T$, the numbers $d_{2k-1}^{\pm}$ are of course not all positive but when $d_{2k-1}^{\pm}$ is negative then $\vert d_{2k-1}^{\pm}\vert$ is very probably small with respect to $d_1^+$ as suggested by our second example in which $-1< d_{2k-1}^+ <0$ and $d_{2k-1}^->0$ for $k=11,12,\ldots,24$. In the general case, some of the $\beta_{2k-1}^{\pm}d_{2k-1}^{\pm}$ must be negative and large in absolute value in order to compensate almost exactly $\beta_1^+d_1^+$. This is hardly conceivable. Note that our argument fails if $S(T+a)$ is "small" each time that $Z(T_M)$ is very large, a situation which is not observed in \cite{Bober}. Anyway, 
our results cast doubt on the numerical evidence in favour of the Riemann hypothesis.  
\section{Some technical results}
The key of our argument is an identity which links the zeros of a function $f\in C^{2r}[-a,a]$ and the values of its derivatives of odd order on the boundaries of the interval. The proofs of Theorem \ref{Main} and Lemmas \ref{Psiprop} and \ref{Psinorm} are very similar to those of Lemmas 2.3 and 2.7 of \cite{Blanc2} and Lemma 3.2  of \cite{Blanc1}.
\begin{thm}\label{Main}
Let $\Psi_{2l-1}$ be defined for pairwise distinct $x_{0},\ldots,x_n\in ( -a,a)$ by 
\[
\Psi_{2l-1}(x_{0},\ldots,x_n,x)=\frac{(4a)^{2l-1}}{(2l)!}\,\sum_{k=0}^{n}\,\mu_k\,\Big{(}B_{2l}\big{(}\frac{1}{2}+\frac{x+x_k}{4a}\big{)}+B_{2l}\big{(}\Big{\{}\frac{x-x_k}{4a}\Big{\}}\big{)}\Big{)}
\]
where  
\[
\mu_k=\frac{1}{\rule[3 mm]{0 cm}{4 mm}\displaystyle \prod_{\genfrac{}{}{0 cm}{} {0\leq j\leq n}{\; j\neq k}}\Big{(}\sin\left(\pi \frac{x_k}{2a}\right)-\sin\left(\pi \frac{x_j}{2a}\right)\Big{)}}\hspace{0.5cm} \mbox{for }k=0,\ldots,n.
\]
Then
\begin{enumerate}
\item[a)] For $l\geq 1$ the functions $\Psi_{2l-1}(\cdot,\ldots,\cdot,\pm a)$ have continuous extensions $\Psi_{2l-1}^*(\cdot,\ldots,\cdot,\pm a)$ defined for $x_{0},\ldots,x_n\in (-a,a)$.
\item[b)] For $2l\geq n+2$ the functions $\Psi_{2l-1}$ have continuous extensions $\Psi_{2l-1}^*$ defined for $x_{0},\ldots,x_n\in ]\negmedspace -a,a[$ and $x\in [ -a,a]$.
\item[c)] If $2r\geq n+2$ and $f\in C^{2r}[-a,a]$ is a function which vanishes at $x_k$ where $-a<x_{0}\leqslant \ldots\leqslant x_n<a$ and the $x_k$ are numbered according to their multiplicities, then we have the identity 
\begin{eqnarray}\label{eq:mainbis}
\sum_{k=1}^r \Psi_{2k-1}^*(a)f^{(2k-1)}(a)&-&\sum_{k=1}^{r}\Psi_{2k-1}^*(-a)f^{(2k-1)}(-a)\\
&=&\int\limits_{-a}^{a}\Psi_{2r-1}^*(x)f^{(2r)}(x)\, dx\nonumber
\end{eqnarray}
where for short $\Psi_{2k-1}^*(\pm a)$ and $\Psi_{2r-1}^*(x)$ stand for $\Psi_{2k-1}^*(\cdot,\ldots,\cdot,\pm a)$ and $\Psi_{2r-1}^*(\cdot,\ldots,\cdot,x)$\,.
\end{enumerate}
\end{thm}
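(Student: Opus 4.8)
The plan is to treat parts (a) and (b) by recognizing the weighted sum over $k$ as a divided difference, and part (c) by repeated integration by parts on strictly separated nodes followed by a coalescence limit.

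For (a) and (b): on $(-a,a)$ the map $x_k\mapsto s_k:=\sin(\pi x_k/(2a))$ is a real-analytic bijection onto $(-1,1)$ with inverse $\psi(s):=(2a/\pi)\arcsin s$, and the weights $\mu_k=\prod_{j\ne k}(s_k-s_j)^{-1}$ are exactly the coefficients of the divided difference $[s_0,\ldots,s_n]$. Hence, writing $x_k=\psi(s_k)$,
\[
\Psi_{2l-1}(x_0,\ldots,x_n,x)=\frac{(4a)^{2l-1}}{(2l)!}\,[s_0,\ldots,s_n]\,G_{l,x},\quad G_{l,x}(s)=B_{2l}\!\big(\tfrac12+\tfrac{x+\psi(s)}{4a}\big)+\widetilde{B}_{2l}\!\big(\tfrac{x-\psi(s)}{4a}\big),
\]
where $\widetilde{B}_{2l}$ is the $1$-periodic Bernoulli function $B_{2l}(\{\cdot\})$. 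By the Hermite--Genocchi formula, $[s_0,\ldots,s_n]G_{l,x}$ equals an integral of $G_{l,x}^{(n)}$ over the standard simplex, which is jointly continuous in the nodes (even as they coalesce) and in $x$, provided $G_{l,x}\in C^n$ on a neighbourhood of the convex hull of $\{s_k\}\subset(-1,1)$. For $x=\pm a$ both Bernoulli arguments stay inside $(0,1)$, so the fractional part is inactive and $G_{l,\pm a}$ is $B_{2l}$ composed with the real-analytic $\psi$, hence $C^\infty$; this gives (a) for every $l\ge1$. For interior $x$ the argument of $\widetilde{B}_{2l}$ crosses an integer when $\psi(s)=x$, and there $\widetilde{B}_{2l}$ is exactly $C^{2l-2}$; composition with the smooth $\psi$ preserves this, so $G_{l,x}\in C^{2l-2}$, and the order-$n$ divided difference is admissible precisely when $2l-2\ge n$, i.e. $2l\ge n+2$, yielding (b).

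For (c) I would first assume strict ordering $-a<x_0<\cdots<x_n<a$. Setting $\Phi_m(x)=\frac{(4a)^m}{(m+1)!}\sum_k\mu_k\big(B_{m+1}(\tfrac12+\tfrac{x+x_k}{4a})+\widetilde{B}_{m+1}(\tfrac{x-x_k}{4a})\big)$, so that $\Phi_{2r-1}=\Psi_{2r-1}$, the identity $B_{m+1}'=(m+1)B_m$ and the chain rule (valid away from the $x_k$) give $\Phi_m'=\Phi_{m-1}$. I would then integrate $\int_{-a}^a\Psi_{2r-1}f^{(2r)}$ by parts $2r-1$ times, collecting boundary terms $[\Phi_{2r-j}f^{(2r-j)}]_{-a}^a$. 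Three facts finish the computation: (i) the reflection identity $B_m(1-u)=(-1)^m B_m(u)$ forces $\Phi_{2k}(\pm a)=0$, so every even-index boundary term vanishes and only $\sum_{i=1}^r[\Psi_{2i-1}f^{(2i-1)}]_{-a}^a$ survives; (ii) in the final step $-\int_{-a}^a\Phi_0 f'$, the function $\Phi_0$ jumps by $-\mu_i$ at each $x_i$ (from $\widetilde{B}_1$), its smooth derivative is $\Phi_{-1}=\tfrac{1}{2a}\sum_k\mu_k=\tfrac1{2a}[s_0,\ldots,s_n]1=0$, and $\Phi_0(\pm a)=0$ again by (i); (iii) the resulting interior jump contributions are proportional to the values $f(x_i)$, which vanish since $f(x_i)=0$. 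This yields the identity for distinct nodes.

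Finally I would pass to coincident nodes by approximating $x_0\le\cdots\le x_n$ with strictly separated $x_k^{(\varepsilon)}$. Part (a) makes the left-hand side converge and part (b) (which is why $2r\ge n+2$ is imposed) makes $\Psi_{2r-1}^*$ continuous on $[-a,a]$, so the Hermite--Genocchi representation provides a bound uniform near the simplex and dominated convergence passes the limit through the integral. The delicate point is precisely this: when $m$ nodes coalesce at a point $\xi$, one must verify that the vanishing of the interior jump/limit contributions persists, and this is exactly where the hypothesis that $f$ vanishes at $\xi$ to order $m$ (counting multiplicities) is consumed. I expect the main obstacle to be this coalescence bookkeeping, together with pinning down the signs and the periodic-Bernoulli jump constants in steps (ii)--(iii), rather than any conceptual difficulty.
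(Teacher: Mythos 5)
Your outline is essentially the argument that the paper itself only cites (it gives no proof of Theorem \ref{Main}, deferring to Lemmas 2.3 and 2.7 of the Beograd paper and Lemma 3.2 of the Steklov paper): the weights $\mu_k$ are divided-difference coefficients in the variables $s_k=\sin(\pi x_k/2a)$, Hermite--Genocchi gives the continuous extensions in a) and b) (your count is right: $\widetilde B_{2l}$ is $C^{2l-2}$ at integers, so an order-$n$ divided difference needs $2l-2\ge n$), and c) for distinct nodes follows from the ladder $\Phi_m'=\Phi_{m-1}$, the reflection $B_m(1-u)=(-1)^mB_m(u)$ killing the even boundary terms, $\sum_k\mu_k=0$ killing the last integral, and the jumps $-\mu_k$ of $\widetilde B_1$ producing $\sum_k\mu_kf(x_k)$. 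All of these checks are correct (one harmless imprecision: at $x=-a$ the argument $\tfrac{-a-x_k}{4a}$ lies in $(-\tfrac12,0)$, so the fractional part is active, but it only adds the locally constant $1$ and smoothness is unaffected).

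The one step you leave open --- how the multiplicity hypothesis is consumed under coalescence --- closes cleanly and without bookkeeping if you organize it as follows. Prove the distinct-node identity \emph{without} assuming $f$ vanishes at the nodes; the interior jump contribution then survives as the extra term
\[
\sum_{k=0}^{n}\mu_k f(x_k)\;=\;[s_0,\ldots,s_n]\,(f\circ\psi),
\]
i.e.\ exactly the $n$-th divided difference of $f\circ\psi$ at the $s_k$. Since $f\in C^{2r}$ with $2r\ge n+2$, $f\circ\psi\in C^{n}$ and this divided difference extends continuously to confluent nodes (same Hermite--Genocchi representation you already use for a) and b)). Its confluent value is the leading coefficient of the Hermite interpolation polynomial of $f\circ\psi$ at the multiset $\{s_0,\ldots,s_n\}$, which vanishes precisely when $f\circ\psi$ --- equivalently $f$, as $\psi$ is a diffeomorphism --- vanishes at each node with at least the multiplicity it carries in the list. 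So the extra term tends to $0$ along your approximating distinct configurations, the boundary terms converge by a), and the integral converges by b) with a uniform Hermite--Genocchi bound; no separate verification at each coalescence point is needed. With that addition your proposal is a complete proof along the same lines as the cited source.
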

\vspace{0.3 cm}
\begin{lem}\label{Psiprop}
Let $\Psi_{2l-1}^*$ be the function defined in Theorem \ref{Main}.
Then
\[
(-1)^{l+1}\Psi_{2l-1}^*(a)>0\,\,\,\mbox{and}\,\,\,(-1)^{n+l+1}\Psi_{2l-1}^*(-a)>0\,.
\]
\end{lem}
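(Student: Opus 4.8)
The plan is to evaluate $\Psi^*_{2l-1}(\pm a)$ in closed form, to recognise the resulting sum as a divided difference, and then to reduce the two sign statements to the sign of a single high-order derivative. First I would compute $\Psi_{2l-1}(x_0,\dots,x_n,\pm a)$ for pairwise distinct nodes, where $\Psi^*$ agrees with $\Psi$. Since $x_k\in(-a,a)$, the two Bernoulli arguments collapse onto one another: using the reflection rule $B_{2l}(1-y)=B_{2l}(y)$ one finds that at $x=+a$ both terms equal $B_{2l}\big(\tfrac14-\tfrac{x_k}{4a}\big)$, while at $x=-a$ both equal $B_{2l}\big(\tfrac14+\tfrac{x_k}{4a}\big)$. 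Introducing $s_k=\sin\big(\pi\tfrac{x_k}{2a}\big)\in(-1,1)$, so that $\tfrac{x_k}{4a}=\tfrac{1}{2\pi}\arcsin s_k$, these become $G_\pm(s_k)$ where
\[
G_+(s):=B_{2l}\Big(\frac{\arccos s}{2\pi}\Big),\qquad G_-(s):=G_+(-s)=B_{2l}\Big(\frac12-\frac{\arccos s}{2\pi}\Big).
\]
Because $\mu_k=\prod_{j\ne k}(s_k-s_j)^{-1}$, the sum $\sum_{k=0}^n\mu_k G_\pm(s_k)$ is exactly the $n$-th divided difference $G_\pm[s_0,\dots,s_n]$, so that
\[
\Psi^*_{2l-1}(a)=\frac{2(4a)^{2l-1}}{(2l)!}\,G_+[s_0,\dots,s_n],\qquad \Psi^*_{2l-1}(-a)=\frac{2(4a)^{2l-1}}{(2l)!}\,G_-[s_0,\dots,s_n].
\]

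Next I would invoke the Hermite--Genocchi (mean value) theorem for divided differences: since $G_\pm\in C^\infty(-1,1)$, there is a $\xi$ in the convex hull of $s_0,\dots,s_n$, hence in $(-1,1)$, with $G_+[s_0,\dots,s_n]=G_+^{(n)}(\xi)/n!$ (the confluent case $x_0\le\dots\le x_n$ being covered by the same formula, so that the continuity of $\Psi^*$ is respected). As the prefactor is positive and $n\ge1$, this gives $\sign\Psi^*_{2l-1}(a)=\sign G_+^{(n)}(\xi)$. Moreover $G_-^{(n)}(s)=(-1)^nG_+^{(n)}(-s)$, so $\sign\Psi^*_{2l-1}(-a)=(-1)^n\sign G_+^{(n)}$; since $(-1)^{n+l+1}=(-1)^n(-1)^{l+1}$, both assertions follow once I show that $(-1)^{l+1}G_+^{(n)}(s)>0$ throughout $(-1,1)$.

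This last step is the crux. Using the Fourier expansion of $B_{2l}$ together with $\cos(m\arccos s)=T_m(s)$, I would write
\[
(-1)^{l+1}G_+(s)=\frac{2(2l)!}{(2\pi)^{2l}}\sum_{m\ge1}\frac{T_m(s)}{m^{2l}}=\frac{2(2l)!}{(2\pi)^{2l}(2l-1)!}\int_0^1(-\log x)^{2l-1}\,\frac{s-x}{1-2sx+x^2}\,dx,
\]
where the second equality uses $\sum_{m\ge1}T_m(s)x^{m-1}=(s-x)/(1-2sx+x^2)$ and $m^{-2l}=\frac{1}{(2l-1)!}\int_0^1x^{m-1}(-\log x)^{2l-1}\,dx$. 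A short induction then gives the key closed form
\[
\frac{\partial^n}{\partial s^n}\,\frac{s-x}{1-2sx+x^2}=\frac{2^{n-1}\,n!\,x^{n-1}(1-x^2)}{(1-2sx+x^2)^{n+1}},
\]
which is strictly positive for $x\in(0,1)$ and $s\in(-1,1)$. Differentiating under the integral sign (justified by dominated convergence, the factor $(-\log x)^{2l-1}$ being integrable and the denominator bounded below since $1-2sx+x^2=(1-x)^2+2x(1-s)>0$), the integrand remains strictly positive, whence $(-1)^{l+1}G_+^{(n)}(s)>0$, completing the argument.

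I expect the main obstacle to be precisely this positivity: the naive Chebyshev series for $G_+^{(n)}$ diverges once $2n\ge 2l$, so one cannot simply differentiate term by term, and the whole point is to trade the series for the integral representation whose kernel has the manifestly positive $n$-th $s$-derivative displayed above. Everything else---the two reflection identities, the identification of the $\mu_k$-sum as a divided difference, and the symmetry $G_-(s)=G_+(-s)$ that transfers the sign from $+a$ to $-a$---is routine, with the only standing hypothesis being $n\ge1$, which holds in the setting of Section~2.
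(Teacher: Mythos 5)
Your proof is correct. Note first that the paper gives no self-contained proof of this lemma: it only remarks that the argument is ``very similar'' to Lemmas 2.3 and 2.7 of \cite{Blanc2} and Lemma 3.2 of \cite{Blanc1}. Your reduction is exactly the one those references (and the end of Section 3 of this paper) rely on: the reflection $B_{2l}(1-y)=B_{2l}(y)$ collapses the two Bernoulli terms, the $\mu_k$-sum becomes the $n$-th divided difference of $G_{\pm}(s)=B_{2l}\bigl(\tfrac{1}{4}\mp\tfrac{1}{2\pi}\arcsin s\bigr)$ in the variables $s_k=\sin(\pi x_k/2a)$ (compare the paper's own rewriting of $\beta_1^+$ via $f(t)=B_2\bigl(\tfrac12+\tfrac1\pi\mathrm{Arcsin}\sqrt{(1+t)/2}\bigr)$ and its appeal to \cite[Lemma 2.4]{Blanc2} for the sign of high derivatives of that composition), and the Hermite--Genocchi mean value theorem transfers everything to the sign of $G_+^{(n)}$ on $(-1,1)$. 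Where you genuinely add value is the crux step: the identity $(-1)^{l+1}G_+(s)=\frac{2(2l)!}{(2\pi)^{2l}}\sum_{m\ge1}T_m(s)/m^{2l}$ combined with the generating function $\sum_{m\ge1}T_m(s)x^{m-1}=(s-x)/(1-2sx+x^2)$ and the closed form $\partial_s^n\bigl[(s-x)/(1-2sx+x^2)\bigr]=2^{n-1}n!\,x^{n-1}(1-x^2)(1-2sx+x^2)^{-(n+1)}>0$ gives a clean, fully self-contained proof of $(-1)^{l+1}G_+^{(n)}(s)>0$, which the paper merely outsources; I checked the base case, the induction, and the justification for differentiating under the integral, and they are all sound. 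You are also right to flag the standing hypothesis $n\ge 1$: for $n=0$ the statement is false (take $x_0$ near $-a$, where $(-1)^{l+1}B_{2l}(y)$ with $y$ near $\tfrac12$ has the wrong sign), and the paper leaves this implicit since $n$ is odd throughout Section 2.
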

\vspace{0.3 cm}
\begin{lem}\label{Psinorm}
Let $\Psi_{2l-1}^*$ be the function defined in Theorem \ref{Main} and assume that $l\geqslant n+1$. Then
\[
\vert\vert\Psi_{2l-1}^*\vert\vert_2 \leq \;\frac{2^{n-1}\;c_{2l-1,n}}{ \sqrt{a}}\left(\frac{2a}{n\pi}\right )^{2l}
\]
where 
\[
\vert\vert\Psi_{2l-1}^*\vert\vert_2^2 = \int\limits_{-a}^a\left (\Psi_{2l-1}^*(x)\right )^2\,dx
\]
and
\begin{equation}\label{eq:constc}
c_{2l-1,r}=\left(\sum_{s=0}^{\infty}\left (\left (\frac{r}{r+s}\right ) ^{2l-1}\binom{2r+s-1}{s}\right )^2\right)^{\frac{1}{2}}\,.
\end{equation}
Moreover, if $n$ is sufficiently large and $l\geqslant n\frac{\log n}{\log_2 n}$ then $c_{2l-1,n}=e^{o(n)}$ where the $o(\cdot)$ is uniform in $l$.
\end{lem}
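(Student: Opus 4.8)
The plan is to expand $\Psi_{2l-1}^{*}$ in the trigonometric system on $[-a,a]$, to recognise its Fourier coefficients as divided differences of Chebyshev polynomials, and to estimate these by a classical derivative bound. For $0<t<1$ the even--index Bernoulli polynomial has the absolutely convergent expansion $B_{2l}(t)=(-1)^{l+1}\frac{2(2l)!}{(2\pi)^{2l}}\sum_{m\geq1}m^{-2l}\cos(2\pi m t)$. Substituting this into the two Bernoulli terms of $\Psi_{2l-1}$ (the argument $\frac12+\frac{x+x_k}{4a}$ lies in $(0,1)$, while $\{\frac{x-x_k}{4a}\}$ is already reduced), putting $\theta_k=\pi x_k/(2a)$ and $\xi_k=\sin\theta_k$, and separating the cosines by the parity of $m$, the addition formulas turn the bracket into $2\cos(\frac{m\pi x}{2a})\cos(m\theta_k)$ for $m=2p$ and $2\sin(\frac{m\pi x}{2a})\sin(m\theta_k)$ for $m=2p+1$; in both cases $\cos(m\theta_k)$, resp.\ $\sin(m\theta_k)$, equals $(-1)^{\lfloor m/2\rfloor}T_m(\xi_k)$ with $T_m$ the Chebyshev polynomial. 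Since $\mu_k=1/\prod_{j\neq k}(\xi_k-\xi_j)$, the sum over $k$ is the $n$-th divided difference $\pm[\xi_0,\dots,\xi_n]T_m$, which annihilates every polynomial of degree $<n$; hence the expansion runs only over $m\geq n$. The surviving profiles $\cos(\frac{m\pi x}{2a})$ ($m$ even) and $\sin(\frac{m\pi x}{2a})$ ($m$ odd) are pairwise orthogonal on $[-a,a]$ with common squared norm $a$ (an elementary product--to--sum computation, the even/odd cross terms vanishing by symmetry), so Parseval gives
\[
\|\Psi_{2l-1}^{*}\|_2^{2}=\frac{2^{4l}a^{4l-1}}{\pi^{4l}}\sum_{m\geq n}\frac{\big([\xi_0,\dots,\xi_n]T_m\big)^{2}}{m^{4l}}.
\]

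It remains to bound the divided difference uniformly in the nodes $\xi_k\in(-1,1)$. By the Hermite--Genocchi formula it equals $\int_{\Sigma}T_m^{(n)}(t_0\xi_0+\dots+t_n\xi_n)\,d\sigma$ over the standard simplex $\Sigma$ of total mass $1/n!$; as the argument is a convex combination of points of $[-1,1]$, this yields $|[\xi_0,\dots,\xi_n]T_m|\leq\frac1{n!}\max_{[-1,1]}|T_m^{(n)}|$. By the theorem of V.\,A.\ Markov this maximum is attained at the endpoint, $\max_{[-1,1]}|T_m^{(n)}|=T_m^{(n)}(1)=\prod_{k=0}^{n-1}\frac{m^2-k^2}{2k+1}$, and writing the product with factorials a direct simplification collapses it to
\[
\frac1{n!}T_m^{(n)}(1)=\frac{m}{n}\,2^{\,n-1}\binom{m+n-1}{m-n}.
\]
Inserting this in the Parseval identity and matching the powers of $2$, $a$, $\pi$ and $n$ reproduces the stated bound, the remaining series being exactly $c_{2l-1,n}^{2}$ after the substitution $s=m-n$; its summand is $\asymp m^{4n-4l}$, so convergence forces $l\geq n+1$, which is the hypothesis. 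Since the Hermite--Genocchi representation and this estimate persist when nodes coincide, the bound extends by continuity to the confluent configurations covered by $\Psi_{2l-1}^{*}$.

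For the final assertion, set $a_s=\big(\frac{n}{n+s}\big)^{2(2l-1)}\binom{2n+s-1}{s}^{2}$ and evaluate $c_{2l-1,n}^{2}=\sum_{s\geq0}a_s$ by the Laplace (dominant--term) method. Stirling and digamma asymptotics place the maximal term near $s^{\ast}\approx 2n\,e^{-(2l-1)/n}$, where $\log a_{s^{\ast}}=O(s^{\ast})=O\!\big(n\,e^{-(2l-1)/n}\big)$. For $l\geq n\log n/\log_2 n$ one has $(2l-1)/n\geq 2\log n/\log_2 n-o(1)$, whence $s^{\ast}=O(n^{\,1-2/\log_2 n})=o(n)$ and $\log a_{s^{\ast}}=o(n)$; as this bound only decreases when $l$ grows, it is uniform in $l$. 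The terms fall off geometrically on either side of $s^{\ast}$ and, for large $s$, like $s^{\,4n-4l}$ (summable since $l\geq n+1$), so the full sum exceeds its maximal term by at most a factor polynomial in $n$, again $e^{o(n)}$; taking square roots gives $c_{2l-1,n}=e^{o(n)}$.

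I expect the two delicate points to be, first, the sharp endpoint identity $\max_{[-1,1]}|T_m^{(n)}|=T_m^{(n)}(1)$ together with its exact value, since it is precisely this constant that makes the bound collapse to $c_{2l-1,n}$ rather than to something larger; and second, making the Laplace estimate of the last paragraph genuinely uniform in $l$ over the whole range $l\geq n\log n/\log_2 n$. The Fourier and divided--difference reduction of the first step parallels the arguments quoted for the lemmas of \cite{Blanc1} and \cite{Blanc2} and should be routine.
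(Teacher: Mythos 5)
Your proposal is correct, and it follows essentially the same route as the proof the paper points to (the paper itself defers to Lemmas 2.3 and 2.7 of \cite{Blanc2} and Lemma 3.2 of \cite{Blanc1}): Fourier expansion of $B_{2l}$, reduction of the coefficients to divided differences of Chebyshev polynomials in the variables $\sin(\pi x_k/2a)$, Parseval, and the identity $\frac{1}{n!}T_m^{(n)}(1)=\frac{m}{n}2^{n-1}\binom{m+n-1}{m-n}$, which is exactly what produces the constant $c_{2l-1,n}$ after the substitution $s=m-n$. The only looseness is in the final Laplace-method paragraph (the bound $\log a_{s^*}=O(s^*)$ should really be $O\bigl(s^*(1+(2l-1)/n)\bigr)$, which is still $o(n)$ in the stated range), but this is easily tightened and does not affect the conclusion.
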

The next theorem \cite{Blanc3} and its corollary, which hold without assuming the Riemann hypothesis, are concerned with the high order derivatives of the Hardy $Z$ function.
\begin{thm}\label{boundZk}
Let $t$ be large enough and $c\geq e$ be a fixed constant. Then, for $0\leq k\leq  \frac{c}{2}\,\theta'(t)$, we have, uniformly in $k$,
\begin{eqnarray*}
Z^{(k)}(t)&=&2\hspace{-2mm}\sum_{1\leq n \leq \sqrt{\frac{t}{2\pi}}}\frac{1}{\sqrt{n}}\left ( \theta'(t)-\log n\right)^{k}\cos\left(\theta(t)-t\log n+k\frac{\pi}{2}\right)
\nonumber \\& +& O\left(t^{ \frac{1}{4}(c\log c -c -1)}\theta'(t)^{k+1}\right).
\end{eqnarray*}
\end{thm}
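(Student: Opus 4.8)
The plan is to start from Siegel's integral form of the Riemann--Siegel formula, which writes $Z$ as a main sum plus a remainder coming from a saddle-point integral, and then to differentiate each piece $k$ times. With $s=\tfrac12+it$ and $N=\lfloor\sqrt{t/2\pi}\rfloor$, the main sum is $2\,\Re\big(e^{i\theta(t)}\sum_{n\le N}n^{-s}\big)=2\sum_{n\le N}n^{-1/2}\cos\phi_n(t)$ with phase $\phi_n(t)=\theta(t)-t\log n$, while the remainder $R(t)$ satisfies $R(t)\ll t^{-1/4}$ on the real axis and extends analytically to a complex neighbourhood. The asserted formula then splits into a leading term, obtained by differentiating the main sum, and two error contributions: the subleading terms in the derivative of the main sum, and $R^{(k)}(t)$.

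For the leading term, note that $\phi_n'(t)=\theta'(t)-\log n$ while $\phi_n^{(j)}(t)=\theta^{(j)}(t)$ for $j\ge2$. By the Fa\`{a} di Bruno formula the $k$-th derivative of $e^{i\phi_n(t)}$ equals $i^k\big(\theta'(t)-\log n\big)^k e^{i\phi_n(t)}$ plus a sum of terms each carrying at least one factor $\theta^{(j)}(t)$ with $j\ge2$; taking $2\,\Re$ and using $\Re\big(i^k e^{i\phi}\big)=\cos(\phi+k\pi/2)$ reproduces exactly the main sum in the statement. Since $\theta'(t)\asymp\log t$ whereas $\theta^{(j)}(t)\ll t^{-(j-1)}$ for $j\ge2$, each such correction is smaller than the leading contribution by at least a factor $t^{-1}$; a short estimate of the Bell polynomials, using $k\ll\log t$, shows that after summation over $n\le N$ these corrections are $\ll\theta'(t)^{k}t^{-1}$, which is comfortably inside the claimed error for every $c\ge e$.

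The main obstacle is the bound for $R^{(k)}(t)$. Here I would differentiate the saddle-point integral defining $R$ under the integral sign: since its integrand carries a factor $u^{-s}$ and $\tfrac{d}{dt}=i\tfrac{d}{ds}$, differentiating $k$ times introduces a weight $(-\log u)^k$, whose value at the saddle point $u_0=\sqrt{t/2\pi}$ is $(\log u_0)^k\asymp\theta'(t)^k$, and re-running the method of steepest descent on the weighted integral yields a bound of the form $\theta'(t)^{k+1}$ times the size of $R$ away from the critical line (the additional power of $\theta'$ absorbing the derivatives that fall on the slowly varying factors and the length of the sum). Equivalently one may estimate $R^{(k)}(t)\ll k!\,\rho^{-k}\max_{|z-t|=\rho}|R(z)|$ by Cauchy's inequality on a disc whose radius $\rho\asymp k/\theta'(t)$ is forced by the constraint $k\le\tfrac c2\theta'(t)$, so that $\rho\asymp c$ is no longer small and one must control the growth of $R$ in a strip of width $\asymp c$.

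The delicate point is the resulting optimization. With $k$ as large as $\tfrac{c}{2}\theta'(t)$ one has $k\asymp\log t$, so Stirling's approximation $k!\sim(k/e)^k\sqrt{2\pi k}$ turns $k!\,\rho^{-k}$ into the product of the expected factor $\asymp\theta'(t)^{k}$ and a genuine power of $t$; balancing this against the off-axis growth of $R$ and choosing $\rho$ optimally produces an exponent in $t$ equal to $\tfrac14(c\log c-c-1)$. The crux is to carry out this Laplace/large-deviation estimate so that it is \emph{uniform} in $k$ throughout $0\le k\le\tfrac c2\theta'(t)$ and reduces to the classical remainder bound $t^{-1/4}$ at $c=e$ (indeed $c\log c-c-1=-1$ there). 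Granting this, the remainder estimate dominates the $\theta$-derivative corrections of the second paragraph, and the two together give the stated error term; everything else is the bookkeeping packaged in the identity $\sum_{j=0}^{k}\binom{k}{j}(i\theta')^{k-j}(-i\log n)^{j}=i^{k}(\theta'-\log n)^{k}$, which is what makes the differentiated main sum collapse to a single power.
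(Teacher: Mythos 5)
First, a point of reference: the paper does not actually prove Theorem \ref{boundZk} --- it is imported from \cite{Blanc3} with no argument given here --- so there is no internal proof to compare yours against, and your proposal has to stand on its own. Judged that way, your treatment of the main sum is sound: Fa\`a di Bruno together with $\theta^{(j)}(t)\ll t^{1-j}$ for $j\geqslant 2$ and $k\ll\log t$ does reduce the $k$-th derivative of $2\sum_{n\leqslant N}n^{-1/2}\cos(\theta(t)-t\log n)$ to the displayed main term plus corrections of size $O(\theta'(t)^{k}t^{-3/4+\varepsilon})$ (the factor $t^{1/4}$ coming from $\sum_{n\leqslant N}n^{-1/2}$ should appear explicitly in your bookkeeping, but it is harmless since $c\log c-c-1\geqslant -1$ for $c\geqslant e$).

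The genuine gap is in the remainder. Everything that makes the theorem nontrivial --- the exponent $\frac14(c\log c-c-1)$ and the uniformity over the whole range $0\leqslant k\leqslant\frac{c}{2}\theta'(t)$ --- is concentrated in the bound for $R^{(k)}(t)$, and there you only assert that ``re-running the method of steepest descent'' or ``balancing\ldots produces'' the stated exponent. Two concrete obstacles are left untouched. (i) The decomposition $Z=\mbox{(main sum)}+R$ with cutoff $N=\lfloor\sqrt{t/2\pi}\rfloor$ is discontinuous at the points $t=2\pi m^2$; on a Cauchy disc of radius $\rho\asymp c$ you must freeze $N=N(t)$ throughout the disc and then control the resulting analytic remainder when the disc straddles such a point. (ii) More seriously, the trivial off-axis estimate is useless: at distance $y$ from the real axis both $Z(z)$ and the frozen-$N$ main sum are individually of size about $t^{y/2}$, so without a quantitative off-line version of the Riemann--Siegel approximation one only gets $\max_{|z-t|=\rho}|R(z)|\ll t^{\rho/2}$, and the optimization $\min_{\rho}\,k!\,\rho^{-k}t^{\rho/2}$ then yields merely $R^{(k)}(t)\ll\theta'(t)^{k}$, which is strictly weaker than the claimed $t^{\frac14(c\log c-c-1)}\theta'(t)^{k+1}\approx (c/e)^{k}t^{-1/4}\theta'(t)^{k+1}$. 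What is needed is a bound of the shape $|R(t+iy)|\ll t^{-1/4}e^{O(|y|\log t)}$ with an explicit constant in the exponent, i.e.\ a version of Siegel's asymptotic expansion valid in a strip of fixed width in which the saving $t^{-1/4}$ survives, followed by a Stirling optimization that really lands on $\frac14(c\log c-c-1)$ uniformly in $k$. That computation is the actual content of the theorem, and it is precisely the part your argument defers with ``granting this''.
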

\begin{cor}\label{boundZ2k}
Let T be large, $0<a\ll 1$ and $K\!\in\!\!\left.\left(\frac{1}{4}\theta'(T),\frac{7}{8}\theta'(T)\right]\right.$. Then, for $t\in [T-a,T+a]$, we have
\[
\vert Z^{(2K)}(t)\vert\leqslant \min\left(\log T,\,3\, \zeta\left(\frac{1}{2}+\frac{2K}{\theta'(T)}\right)\right)\theta'(T)^{2K}.
\]
\end{cor}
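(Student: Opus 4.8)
The plan is to feed $k=2K$ into the explicit formula of Theorem~\ref{boundZk}, bound the cosine trivially by~$1$, and reduce everything to a single Dirichlet-type sum. First I fix a constant $c$ with $\frac72<c<c_0$, where $c_0\approx 3.59$ is the root of $c\log c-c-1$; such a $c$ is admissible because the hypothesis $K\le\frac78\theta'(T)$, together with $\theta'(t)=\theta'(T)\bigl(1+O(a/(T\log T))\bigr)$ uniformly on $[T-a,T+a]$ (since $\theta''(t)=1/(2t)$ and $a\ll1$), gives $2K\le\frac{c}{2}\theta'(t)$ for $T$ large. This is exactly where the upper bound $\frac78$ enters: it keeps $4K/\theta'(t)$ below $c_0$, so that $c\log c-c-1<0$ and the error term $O\!\bigl(t^{\frac14(c\log c-c-1)}\theta'(t)^{2K+1}\bigr)$ is a negative power of $t$ times $\theta'(t)^{2K+1}$, hence $o(\theta'(T)^{2K})$. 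Taking absolute values and factoring out $\theta'(t)^{2K}$ (legitimate since $0\le\theta'(t)-\log n\le\theta'(t)$ for $1\le n\le\sqrt{t/2\pi}$, up to a boundary term of negligible weight) reduces the corollary to estimating
\[
\Sigma:=\sum_{1\le n\le\sqrt{t/2\pi}}\frac{1}{\sqrt n}\Bigl(1-\frac{\log n}{\theta'(t)}\Bigr)^{2K}.
\]

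For the second entry of the minimum I use $\log(1-x)\le -x$ to obtain $\bigl(1-\frac{\log n}{\theta'(t)}\bigr)^{2K}\le n^{-2K/\theta'(t)}$ and extend the sum to infinity:
\[
\Sigma\le\sum_{n=1}^{\infty} n^{-\frac12-\frac{2K}{\theta'(t)}}=\zeta\Bigl(\frac12+\frac{2K}{\theta'(t)}\Bigr),
\]
the series converging because $K>\frac14\theta'(T)$ forces $\frac{2K}{\theta'(t)}>\frac12$. The argument differs from $\frac12+\frac{2K}{\theta'(T)}$ by only $O(a/(T\log T))$, negligible next to its distance $\gtrsim 1/\theta'(T)$ from the pole, so continuity of $\zeta$ gives $\zeta(\frac12+\frac{2K}{\theta'(t)})=\zeta(\frac12+\frac{2K}{\theta'(T)})(1+o(1))$; combining the prefactor $2$, this $o(1)$, the relation $\theta'(t)^{2K}=\theta'(T)^{2K}(1+o(1))$ and the negligible error term yields the bound $3\,\zeta(\frac12+\frac{2K}{\theta'(T)})\,\theta'(T)^{2K}$ for $T$ large.

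The first entry $\log T$ is the operative one only near the lower endpoint $K\approx\frac14\theta'(T)$, where $\zeta(\frac12+\frac{2K}{\theta'(T)})$ sits close to its pole. Here the chain above is too lossy: replacing $(1-\frac{\log n}{\theta'(t)})^{2K}$ by $n^{-2K/\theta'(t)}$ and comparing with the harmonic series only gives $\Sigma\le 1+\log\sqrt{t/2\pi}$, which overshoots $\frac12\log T$ by the additive constant $1-\frac12\log 2\pi$ and so misses the clean constant. The true summand decays genuinely faster for $n$ near $\sqrt{t/2\pi}$, where $1-\frac{\log n}{\theta'(t)}$ is near $0$; a Laplace/Euler--Maclaurin estimate (the integrand $e^{\theta'(t)g(x)}$ with $g(x)=\frac x2+\frac{2K}{\theta'(t)}\log(1-x)$ is maximised at $x=0$ and the mass is governed by the local behaviour there) shows $\Sigma\ll\sqrt{\theta'(t)}=\sqrt{\tfrac12\log T}\,(1+o(1))$ in this regime, far below $\frac12\log T$. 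Hence $2\,\theta'(t)^{2K}\Sigma\le\log T\,\theta'(T)^{2K}$ comfortably, and taking the minimum of the two bounds finishes the argument.

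The main obstacle is the bookkeeping of constants rather than the structure. I must check that shifting $\theta'(t)$ to $\theta'(T)$ and absorbing the error term keep the first factor at most $3$ even when the argument of $\zeta$ lies within $O(1/\log T)$ of its pole, and that in the complementary regime the finer Laplace-type estimate of $\Sigma$, not the lossy harmonic comparison, is what secures the clean $\log T$. The delicate calibration of $c$ strictly below $c_0$, guaranteed precisely by $K\le\frac78\theta'(T)$, is what renders the error term harmless throughout.
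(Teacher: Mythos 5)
Your overall structure is the paper's: apply Theorem~\ref{boundZk} with $k=2K$ (the hypothesis $2K\le\frac74\theta'(T)$ letting you take $c=\frac72+O(a/(T\log T))$, so the error term is $O(t^{-1/50}\theta'(t))\cdot\theta'(t)^{2K}$), bound the cosine by $1$, use $(1-\frac{\log n}{\theta'(t)})^{2K}\le n^{-2K/\theta'(t)}$, and then compare the resulting Dirichlet sum with $\zeta(\frac12+\frac{2K}{\theta'(T)})$ on one hand and with the harmonic sum on the other. Your treatment of the $\zeta$-branch, including the observation that the perturbation $O(a/(T\log T))$ of the exponent is harmless even near the pole and that the slack between the prefactor $2$ and the stated $3$ absorbs all error terms, coincides with the paper's (which implements the shift via $n^{O(a/(T\log T))}\le 1+O(a/T)$ rather than via continuity of $\zeta$).

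Where you diverge is the $\log T$ branch, and there your reasoning for abandoning the harmonic comparison is mistaken. You compare with the crude bound $\sum_{n\le N}n^{-1}\le 1+\log N$ and conclude that the constant overshoots; but the paper uses the asymptotic $\sum_{1\le n\le\sqrt{t/2\pi}}n^{-1}=\frac12\log T-\frac12\log 2\pi+\gamma+O(T^{-1/2})$, and since $2\gamma-\log 2\pi\approx-0.68<0$ the doubled sum is $\le\log T-0.68+o(1)$, which comfortably absorbs the $O(T^{-1/50}\theta'(t))$ error and the $(1+O(a/T))$ factors. So the ``lossy'' comparison is exactly what secures the clean constant, with no Laplace-type analysis needed. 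Your substitute claim $\Sigma\ll\sqrt{\theta'(t)}$ near $K\approx\frac14\theta'(T)$ is in fact true (the quadratic term in $\log(1-x)$ produces a Gaussian factor $e^{-K(\log n)^2/\theta'(t)^2}$ whose integral is $O(\sqrt{\theta'})$), and since any absolute implied constant times $\sqrt{\log T}$ is eventually below $\frac12\log T$, your route would also close the argument; but as written it is only asserted, and carrying it out rigorously (uniformly in $K$ over the whole interval and up to the cutoff $n\le\sqrt{t/2\pi}$) is genuinely more work than the two-line harmonic estimate it replaces. I would accept the proof once the $\log T$ branch is rewritten with the precise partial-sum asymptotic, or the Laplace estimate is actually proved.
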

\begin{proof}
For $t\in[T-a,T+a]$ and since $2K\leqslant \frac{7}{4}\theta'(T)$ and $\theta''(t)=O\left(\frac{1}{t}\right)$, we can use Theorem \ref{boundZk} with $c=\frac{7}{2}+O\left(\frac{a}{T\log T}\right)$ to get the bound
\[
\vert Z^{(2K)}(t)\vert\leq \left( 2\left(\sum_{1\leq n \leq \sqrt{\frac{t}{2\pi}}}\frac{1}{\sqrt{n}}
\left( 1-\frac{\log n}{\theta'(t)}\right)^{2K}\right)+O\left(t^{-\frac{1}{50}}\theta'(t)\right)\right)\theta'(t)^{2K}\,.
\]
Moreover $2K> \frac{1}{2}\theta'(T)$ and $\theta'(t)=\theta'(T)(1+O(\frac{a}{T\log T}))$ and we hence have the inequalities
 \[
 \left(1-\frac{\log n}{\theta'(t)}\right)^{2K}\leq e^{ -2K\textstyle\frac{\log n}{\theta'(t)}}= n^{\textstyle-\frac{2K}{\theta'(T)}+O(\frac{a}{T\log T})}\leq n^{-\frac{2K}{\theta'(T)}}\left(1+O\left(\frac{a}{T}\right)\right)
 \]
for $1\leq n \leq~\sqrt{\frac{t}{2\pi}}$\,. We complete the proof using
\[
\sum_{1\leq n \leq \sqrt{\frac{t}{2\pi}}}\frac{1}{n}=\log \sqrt{\frac{t}{2\pi}}+\gamma +O\left(\frac{1}{\sqrt{t}}\right)=\frac{1}{2}\log T -\frac{1}{2}\log 2\pi +\gamma+O\left(\frac{1}{\sqrt{T}}\right)
\]
where $\gamma$ is Euler's constant and
\[
\sum_{1\leq n \leq \sqrt{\frac{t}{2\pi}}}\frac{1}{n^{\frac{1}{2}+\frac{2K}{\theta'(T)}}}\leqslant \zeta\left(\frac{1}{2}+\frac{2K}{\theta'(T)}\right)
\]
together with $\theta'(t)^{2K}=\theta'(T)^{2K}(1+O(\frac{a}{T}))$.
\end{proof}
\vspace{0.5cm}
\noindent Applied to the Hardy $Z$ function and the interval $[T-a,T+a]$, identity (\ref{eq:mainbis}) writes here
\begin{eqnarray*}\label{eq:mainZ}
\sum_{k=1}^{K}\Psi_{2k-1}^*(a)Z^{(2k-1)}(T+a)&-&\sum_{k=1}^{K}\Psi_{2k-1}^*(-a)Z^{(2k-1)}(T-a)\\
&=&\int\limits_{-a}^{a}\Psi_{2K-1}^*(x)Z^{(2K)}(T+x)\, dx\nonumber
\end{eqnarray*}
where $\Psi_{2k-1}^*(\pm a)$ and $\Psi_{2K-1}^*(x)$ stand respectively for $\Psi_{2k-1}^*(x_{0},\ldots,x_n,\pm a)$ 
and $\Psi_{2K-1}^*(x_{0},\ldots,x_n,x)$ and  $x_k = \gamma_k -T$.\\
We set $\alpha_{2k-1}^{\pm}=\pm \Psi_{2k-1}^*(\pm a)$ and we use the Cauchy-Schwarz inequality
together with Lemma \ref{Psinorm} to get the relation  (\ref{eq:main}). Lemma \ref{Psiprop} implies the positivity of $\beta_{2k-1}^{\pm}$ since $n$ is odd and (\ref{eq:main2}) is a consequence of Corollary \ref{boundZ2k} and relation (\ref{eq:main}).\\
Before turning to the computation of lower bounds for $\beta_1^+$, we first prove some preparatory results.
\begin{thm}\label{S1}
Let $M_0(t)=\displaystyle\max_{0\leq u \leq t}\vert S(u)\vert$ and assume that $M_0(2t)\hspace{-0.05 cm}=\hspace{-0.05 cm} O(M_0(t))$. Then
\[
S_1(t)=O\left(\frac{M_0(t)}{\log_2 t}\right).
\]
\end{thm}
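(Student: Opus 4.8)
The plan is to pass from $S_1$ to an integral of $\log|\zeta|$ on the vertical line through height $t$, and then to read off the cancellation from the zeros by writing that integral as a weight against the logarithmic derivative. First I would establish Littlewood's identity: since $\pi S(u)=\Im\log\zeta(\tfrac12+iu)$, integrating $\log\zeta$ around the boundary of the rectangle $[\tfrac12+\varepsilon,\sigma_1]\times[0,t]$ and letting $\sigma_1\to\infty$ (where $\log\zeta\to0$) and $\varepsilon\to0^+$ produces $\pi S_1(t)$ on the left edge, $-\int_{1/2}^{\infty}\log|\zeta(\sigma+it)|\,d\sigma$ on the top edge, and an absolute constant from the bottom edge, giving (under the standing assumption that the zeros are on the critical line)
\[
\pi S_1(t)=\int_{1/2}^{\infty}\log|\zeta(\sigma+it)|\,d\sigma+O(1).
\]
Integrating by parts in $\sigma$ (the boundary term at infinity vanishes since $\log|\zeta(\sigma+it)|\ll 2^{-\sigma}$) this becomes
\[
\pi S_1(t)=-\int_{0}^{\infty}a\,\Re\tfrac{\zeta'}{\zeta}\!\left(\tfrac12+a+it\right)da+O(1),
\]
so everything reduces to this last integral, in which $a=\sigma-\tfrac12$ plays the role of a scale.

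Next I would insert the partial-fraction expansion
\[
\Re\tfrac{\zeta'}{\zeta}\!\left(\tfrac12+a+it\right)=\sum_{\gamma}\frac{a}{a^{2}+(t-\gamma)^{2}}-\tfrac12\log\tfrac{t}{2\pi}+O(1),
\]
and write the sum over ordinates as a Stieltjes integral against $dN$. Using $N=\tfrac1\pi\theta+1+S$, the smooth part $\int \frac{a}{a^{2}+(t-y)^{2}}\,\tfrac1\pi\theta'(y)\,dy$ equals $\tfrac12\log\tfrac{t}{2\pi}$ up to $O(1)$ and cancels the term $-\tfrac12\log\tfrac{t}{2\pi}$, leaving, after an integration by parts, the fluctuation
\[
F_a(t)=\int \frac{a}{a^{2}+(t-y)^{2}}\,dS(y)=\int K_a'(t-y)\,S(y)\,dy,\qquad K_a(x)=\frac{a}{a^{2}+x^{2}}.
\]
The point of this reduction is that the mean-zero kernel $K_a'$ has $\|K_a'\|_{1}=2/a$, so $|F_a(t)|\le 2M_0(t)/a$ as soon as $|S(y)|$ is controlled by $M_0$ throughout the effective support of the kernel. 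Here the hypothesis enters: the kernel $K_a'(t-y)$ reaches heights $y$ up to $\asymp t$, and for $y\in[t,2t]$ the doubling assumption gives $|S(y)|\le M_0(2t)=O(M_0(t))$, while for $y>2t$ the decay $|K_a'(t-y)|\ll a|t-y|^{-3}$ against the trivial bound $M_0(y)\ll\log y$ contributes only $O(1)$. Since $\Re\tfrac{\zeta'}{\zeta}(\tfrac12+a+it)$ decays exponentially for $a\gg1$, the $a$-integral is effectively over a bounded range, and $\int_0^{O(1)}a\,|F_a(t)|\,da\ll M_0(t)$ already yields the clean but weaker bound $S_1(t)\ll M_0(t)$.

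The hard part is upgrading $O(M_0)$ to $O(M_0/\log_2 t)$, and this is where the estimate $|F_a|\le 2M_0/a$ must be beaten on average. The saving cannot come from $|S|\le M_0$ alone; I would exploit the structure of $S$, namely that between consecutive zeros it decreases smoothly with slope $\asymp-\log t$ and jumps up by its multiplicity at each zero, so that on the scale $a$ the kernel $K_a'$ sees $S$ as an $O(1)$ sawtooth riding on a slowly varying mean. Writing $F_a(t)=\int K_a'(t-y)\bigl(S(y)-S(t)\bigr)dy$ and estimating the local oscillation of $S$ over windows of height $a$ --- equivalently, controlling increments of $S$, which are $O(M_0)$ in the worst case but typically much smaller by the second-moment behaviour of $S$ --- one expects to replace $M_0/a$ by $M_0/(a\log_2 t)$ in the range of $a$ that dominates the integral. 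I expect this local-oscillation estimate, and in particular pinning down that the gain is exactly a factor $\log_2 t$ and not a smaller power of it, to be the main obstacle; it is the analytic heart of the argument, and the doubling hypothesis $M_0(2t)=O(M_0(t))$ is used precisely to make the passage from the fluctuations $F_a$ back to $M_0(t)$ uniform across all the heights the kernels reach.
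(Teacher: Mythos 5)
Your setup (Littlewood's identity $\pi S_1(t)=\int_{1/2}^{\infty}\log|\zeta(\sigma+it)|\,d\sigma+O(1)$, followed by a kernel representation of the fluctuating part of $\log|\zeta|$ in terms of $S$) is sound and does deliver the weaker bound $S_1(t)\ll M_0(t)$, but the theorem's entire content is the extra factor $1/\log_2 t$, and at that point your argument stops: you name the saving as ``the main obstacle'' and propose to obtain it from the local oscillation or second-moment behaviour of $S$. That route does not work as described. The worst-case increment of $S$ over a window of length $a\asymp 1$ around a given height $t$ is genuinely of order $M_0(t)$ (this is exactly what happens near an unusually large value of $|\zeta|$), and no usable pointwise ``second-moment'' control of $S$ near a specific $t$ is available at the strength you would need; so the bound $|F_a(t)|\le 2M_0(t)/a$ cannot be improved by a factor $\log_2 t$ on the range of $a$ that dominates your integral by these means.

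The paper's mechanism for the saving is different and is the classical Titchmarsh bootstrap (Theorem 14.13 of his book). One writes $\log\zeta(\sigma+it)=i\int_{t-\xi}^{t+\xi}\frac{S(y)}{\sigma-\frac12+i(t-y)}\,dy+O\bigl(M_1(2t)/\xi\bigr)+O(1)$, so that the quantity $M_1=\max|S_1|$ being estimated re-enters the bound through the tail error; optimizing $\xi=(M_1/(M_0\log_2 T))^{1/2}$ and applying Hadamard's three-circles theorem shows that $\log|\zeta(\sigma+it)|$ decays like $e^{-C(\sigma-1/2)\log_2 T}$, so the $\sigma$-integral in Littlewood's identity is effectively supported on an interval of length $1/\log_2 T$ --- this, not any oscillation estimate for $S$, is where the factor $\log_2 t$ comes from. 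The result is the functional inequality $M_1(T)\ll (M_0(T)M_1(3T))^{1/2}(\log_2 T)^{-1/2}$, which is then closed by iterating (the function $\Psi(T)=\max_{4\le t\le T}(\log_2 t)M_1(t)/M_0(t)$ satisfies a self-improving inequality), using the doubling hypothesis $M_0(2t)=O(M_0(t))$ to keep all heights comparable. Without this coupling of $M_1$ back into the estimate and the three-circles localization, your proof proposal establishes only $S_1(t)\ll M_0(t)$ and leaves the actual statement unproved.
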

\begin{proof}
The proof is very similar to a proof of a classical result of Titchmarsh \cite[Theorem 14.13]{Titchmarsh} so we only give a sketch. We will make use of the formula
\begin{equation}\label{eq:S1}
S_1(T)=\frac{1}{\pi}\int\limits_{\frac{1}{2}}^{2}\log\vert \zeta(\sigma+iT)\vert\,d\sigma +O(1).
\end{equation}
Let $M_1(t)=\max_{0\leq u\leq t}\vert S_1(u)\vert$. For $\sigma>\frac{1}{2}$ and $0<\xi<\frac{1}{2}t$, we have
\begin{equation}\label{eq:tit}
\log\zeta(\sigma +it)=i\int\limits_{t-\xi}^{t+\xi}\frac{S(y)}{\sigma-\frac{1}{2}+i(t-y)}\,dy+O\left(\frac{M_1 (2t)}{\xi}\right)+O(1)
\end{equation}
and on the set $\Omega=\{\sigma+it\in\mathbb{C}\vert \,\sigma\geqslant \frac{1}{2}+\frac{1}{\log_2T},\,4\leqslant t \leqslant T\}$ we get
\begin{eqnarray*}
\vert\log\zeta(\sigma +it)\vert&\leqslant& M_0(2 t )\int\limits_{t-\xi}^{t+\xi}\frac{dy}{((\sigma-\frac{1}{2})^2+(t-y)^2)^{\frac{1}{2}}}+O\left(\frac{M_1 (2t)}{\xi}\right)+O(1)\\
                     &\leqslant& 2 M_0( 2t )\frac{\xi}{\sigma-\frac{1}{2}}+O\left(\frac{M_1 (2t)}{\xi}\right)+O(1)\\
                     &\leqslant& 2M_0(2 T )(\log_2T)\xi +O\left(\frac{M_1 (2T)}{\xi}\right)+O(1)
\end{eqnarray*}
and the choice $\xi=\displaystyle \left(\frac{M_1(2T)}{M_0( 2T)\log_2T}\right)^{\frac{1}{2}}$ leads to
\[
\log\zeta(\sigma +it)=O\left( (M_0(T)M_1(2T) \log_2 T)^\frac{1}{2}\right)
\]
since we assume that $M_0(2t)=O(M_0(t))$.
Now we apply Hadamard's three-circles theorem to the circles $C_1$, $C_2$ and $C_3$ centred in $\sigma_1+it$ where $\sigma_1=\frac{3}{2}+\frac{1}{\log_2 T}$ of radii $r_1=\sigma_1-\frac{5}{4}$, $r_2=\sigma_1-\sigma$ and $r_3=1$ where~$\sigma < \frac{5}{4}$. Using the facts that $\log M_0(T)\gg\log_2 T$ and $\log M_1(T)\gg\log_2T$ we get
\[
\log\zeta(\sigma +it)=O\left(( M_0(T)M_1(2T+2) \log_2 T)^\frac{1}{2}e^{-C(\sigma-\frac{1}{2})\log_2 T}\right)
\]
for some positive constant $C$, which implies that
\begin{equation}\label{S1part}
\int\limits_{\frac{1}{2}+\frac{1}{\log_2 T}}^{2}\log\vert \zeta(\sigma +it)\vert \,d\sigma=O\left(( M_0(T)M_1(3T))^\frac{1}{2}(\log_2 T)^{-\frac{1}{2}}\right).
\end{equation}
The real part of (\ref{eq:tit}) may be written
\begin{eqnarray*}
\log\vert \zeta(\sigma +it)\vert &=& \int\limits_{0}^{\xi}\frac{x}{(\sigma-\frac{1}{2})^2+x^2}(S(t-x)-S(t+x))\,dx\\
&+&O\left(\frac{M_1(2t)}{\xi}\right)+O(1)\nonumber
\end{eqnarray*}
and we have
\begin{eqnarray*}
\int\limits_{\frac{1}{2}}^{\frac{1}{2}+\mu} \log\vert \zeta(\sigma +it)\vert\,d\sigma &=& \int\limits_{0}^{\xi}\mbox{Arctan}\left(\frac{\mu}{x}\right)(S(t-x)-S(t+x))\,dx\\
+O\left(\mu\frac{M_1(2t)}{\xi}\right)
&+&O(\mu)=O(\xi M_0(2T))+O\left(\mu\frac{M_1(2T)}{\xi}\right)+O(\mu).
\end{eqnarray*}
Finally, we choose $\mu=\frac{1}{\log_2 T}$ and $\xi$ as before to get
\[
\int\limits_{\frac{1}{2}}^{\frac{1}{2}+\frac{1}{\log_2 T}}\log\vert \zeta(\sigma +it)\vert \,d\sigma=O\left(( M_0(T)M_1(2T))^\frac{1}{2}(\log_2 T)^{-\frac{1}{2}}\right)
\]
which together with (\ref{S1part}) and (\ref{eq:S1}) yield
\[
S_1(t)=O\left(( M_0(T)M_1(3T))^\frac{1}{2}(\log_2 T)^{-\frac{1}{2}}\right).
\]
To complete the proof,  we introduce the function
\[
\Psi(T)=\max_{4\leqslant t \leqslant T}\frac{(\log_2 t)M_1(t)}{M_0(t)}
\]
and we proceed as Titchmarsh.
\end{proof}
\begin{lem}\label{meanxk}
Let $x_k = \gamma_k -T$ and $0<a\ll 1$. Then
\[
\sum_{k=0}^n \sin \left(\pi \frac{x_k}{2a} \right)= S(T+a)+S(T-a)+O \Big{(}\frac{\displaystyle M_1(T+a)}{a}\Big{)}
\]
where $M_1(t)=\max_{0\leq u\leq t} \vert  S_1(u)\vert$.
\end{lem}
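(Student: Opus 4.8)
The plan is to express the sum as a Riemann--Stieltjes integral against the zero-counting function and then integrate by parts twice. Set $g(t)=\sin\!\big(\pi\frac{t-T}{2a}\big)$, so the summand is $g(\gamma_k)$ with $x_k=\gamma_k-T$. Since $Z(T\pm a)\neq 0$ there are no zeros at the endpoints, and $N$ is the integer-valued step function that jumps at each $\gamma_k$ by its multiplicity; hence
\[
\sum_{k=0}^n\sin\!\Big(\pi\frac{x_k}{2a}\Big)=\int_{T-a}^{T+a}g(t)\,dN(t).
\]
By (\ref{eq:N}) we may write $dN(t)=\frac1\pi\theta'(t)\,dt+dS(t)$, which splits the integral into a smooth part $\frac1\pi\int_{T-a}^{T+a}g\,\theta'\,dt$ and an arithmetic part $\int_{T-a}^{T+a}g\,dS$, and I would treat these separately.

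For the arithmetic part I would integrate by parts once. As $g$ is continuous with $g(T\pm a)=\pm1$, the boundary term is exactly $S(T+a)+S(T-a)$, leaving $-\int_{T-a}^{T+a}S\,g'\,dt$ with $g'(t)=\frac{\pi}{2a}\cos\!\big(\pi\frac{t-T}{2a}\big)$. This last integral is the heart of the matter, and bounding it directly by $|S|$ is too lossy. Instead I would integrate by parts a second time, using the antiderivative $S_1$ of $S$:
\[
\int_{T-a}^{T+a}S\,g'\,dt=\big[g'S_1\big]_{T-a}^{T+a}-\int_{T-a}^{T+a}S_1\,g''\,dt=-\int_{T-a}^{T+a}S_1(t)\,g''(t)\,dt,
\]
where the linchpin is that $g'(T\pm a)=\frac{\pi}{2a}\cos(\pm\frac{\pi}{2})=0$, so the new boundary term vanishes. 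Since $g''(t)=-\frac{\pi^2}{4a^2}\sin\!\big(\pi\frac{t-T}{2a}\big)$ and $|S_1(t)|\le M_1(T+a)$ on an interval of length $2a$, the remaining integral is $O(M_1(T+a)/a)$, precisely the claimed error.

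It then remains to check that the smooth part is negligible. On the short interval $[T-a,T+a]$ the factor $\theta'$ is nearly constant while $g$ is odd about $t=T$: the leading contribution $\theta'(T)\int_{-a}^{a}\sin\!\big(\pi\frac{u}{2a}\big)\,du$ vanishes by symmetry, and since $\theta''(t)=O(1/t)$ the remainder is $O(a^2/T)$, which is absorbed into $O(M_1(T+a)/a)$ for $T$ large. Combining the three pieces yields the stated identity. The main obstacle I anticipate is the estimation of $\int g\,dS$: the naive bound does not see $M_1$ at all, and the whole point is to trade one power of $S$ for $S_1$ via the second integration by parts, which produces a clean $O(M_1(T+a)/a)$ only because the weight $g$ satisfies both $g(T\pm a)=\pm1$ and $g'(T\pm a)=0$.
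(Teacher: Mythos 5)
Your proposal is correct and follows essentially the same route as the paper: the sum is written as a Riemann--Stieltjes integral of $g(t)=\sin\left(\pi\frac{t-T}{2a}\right)$ against $dN(t)$ via (\ref{eq:N}), a first integration by parts produces the boundary contribution $S(T+a)+S(T-a)$ and shows that the smooth part $\frac{1}{\pi}\int g\,\theta'\,dt$ is negligible, and the remaining integral $\frac{\pi}{2a}\int_{T-a}^{T+a}\cos\left(\pi\frac{t-T}{2a}\right)S(t)\,dt$ is converted into a bound involving $S_1$. The only methodological difference is in this last step: the paper invokes the second mean value theorem (on each half-interval where the cosine is monotone), while you integrate by parts a second time exploiting $g'(T\pm a)=0$; the two devices are interchangeable here and both yield $O(M_1(T+a)/a)$. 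One small oversight in your write-up: with the paper's indexing ($\gamma_{-2}<T-a<\gamma_0\leq\cdots\leq\gamma_n<T+a$) the zero $\gamma_{-1}$ may or may not lie in $(T-a,T+a)$, so $\int_{T-a}^{T+a}g\,dN$ can exceed $\sum_{k=0}^{n}g(\gamma_k)$ by one term $s_{-1}=\sin\left(\pi\frac{\gamma_{-1}-T}{2a}\right)$; the paper subtracts this term explicitly, and since $\vert s_{-1}\vert\leq 1=O\left(M_1(T+a)/a\right)$ it is harmless, but it should be accounted for.
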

\begin{proof}[Proof of Lemma \ref{meanxk}]
Using Stieltjes integral we have 
\begin{eqnarray*}
\sum_{k=0}^n \sin \left(\pi \frac{x_k}{2a} \right)&=&  \sum_{T-a< \gamma_k <T+a} \sin \left(\pi \frac{\gamma_k-T}{2a}\right) -s_{-1}\\ 
& =&\int\limits_{T-a}^{T+a}\sin \left(\pi \frac{t-T}{2a} \right)\; d\left (\frac{1}{\pi}\theta(t)+1+S(t) \right)-s_{-1}
\end{eqnarray*}
where $s_{-1}=0$ or $\sin \left(\pi \frac{\gamma_{-1}-T}{2a} \right) $ according to the position of $\gamma_{-1}$ with respect to $T-a$, and integrations by parts lead to
\begin{eqnarray*}
\sum_{k=0}^n \sin \left(\pi \frac{x_k}{2a} \right) &= &\frac{2a}{\pi^2}\int\limits_{T-a}^{T+a}\cos \left(\pi \frac{t-T}{2a}\right)\theta''(t)\,dt\;+\;S(T+a)+S(T-a)\\
&-&\frac{\pi}{2a}\int\limits_{T-a}^{T+a}\cos \left(\pi \frac{t-T}{2a}\right)S(t)\,dt\,-s_{-1}.
\end{eqnarray*}
We complete the proof using the estimates $\theta''(t)=0 \Big{(}\frac{1}{t}\Big{)}$ and the second mean value theorem.
\end{proof}

\begin{lem}\label{DeltaS}
Let $M_+(t)=\max_{0\leqslant u\leqslant t}S(u)$ and $M_-(t)=\min_{0\leqslant u\leqslant t}S(u)$ and assume that $M_-(t)=O(M_+(t))$ and $M_+(2t)=O(M_+(t))$. Then for $t$ sufficiently large there exists $\alpha\in[1,2]$, depending on $t$, such that 
\[
 S(t-\alpha H)\ll \frac{M_+(t)}{\log_3 t}
\]
where $H= c_H\frac{\log_3 t}{\log_2 t}$ and $c_H$ is a fixed positive constant.
\end{lem}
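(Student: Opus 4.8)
The plan is to deduce the pointwise smallness of $S$ at a suitable point from an averaged estimate, exploiting the special monotonicity structure of $S$. The starting point is Theorem~\ref{S1}. Since the hypotheses give $M_-(t)=O(M_+(t))$ and $M_+(2t)=O(M_+(t))$, the running maximum of $|S|$ satisfies $M_0(t)=\max(M_+(t),-M_-(t))=O(M_+(t))$ and likewise $M_0(2t)=O(M_0(t))$, so Theorem~\ref{S1} applies and yields $S_1(u)=O(M_+(t)/\log_2 t)$ uniformly for $t-2H\le u\le t$ (using $M_0(u)\le M_0(t)$ and $\log_2 u\asymp\log_2 t$ there).

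First I would fix the window $[t-2H,t-H]$, which is exactly the set of points $t-\alpha H$ with $\alpha\in[1,2]$, and record the mean value
\[
\frac{1}{H}\int_{t-2H}^{t-H}S(u)\,du=\frac{S_1(t-H)-S_1(t-2H)}{H}=O\!\left(\frac{M_+(t)}{H\log_2 t}\right)=O\!\left(\frac{M_+(t)}{\log_3 t}\right),
\]
the last step using $H=c_H\log_3 t/\log_2 t$. More importantly, the same computation applied to any subinterval $[\sigma,\tau]\subseteq[t-2H,t-H]$ gives $\int_\sigma^\tau S(u)\,du=O(M_+(t)/\log_2 t)$, a bound I will use to cap the length of any interval on which $S$ keeps one sign and stays large.

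The conversion to a pointwise statement rests on the structure of $S$: between consecutive zeros of $Z$ it is smooth and strictly decreasing (there $N$ is constant while $\theta'/\pi>0$), and at each zero it jumps upward by the multiplicity. Set $B=C\,M_+(t)/\log_3 t$ and suppose, for contradiction, that $|S(u)|>B$ for every $u\in[t-2H,t-H]$. On each interval of continuity $S$ is then either $>B$ or $<-B$ throughout, since a continuous decrease cannot skip the band $[-B,B]$; and because the jumps are upward, once $S$ lies above $B$ it stays above $B$ to the right, and it can never pass from the region $S>B$ to the region $S<-B$. Hence the window would consist of an initial stretch on which $S<-B$ followed by a final stretch on which $S>B$. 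Applying the subinterval bound to each stretch forces its length to be $O(M_+(t)/(B\log_2 t))=O(\log_3 t/\log_2 t)=O(H/c_H)$, so the two stretches together cover a length $O(H/c_H)$, which is $<H$ once $c_H$ is chosen large enough — a contradiction. Therefore some $u^*=t-\alpha H$ with $\alpha\in[1,2]$ satisfies $|S(u^*)|\le B\ll M_+(t)/\log_3 t$.

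The routine parts are the verification of the hypotheses of Theorem~\ref{S1} and the mean-value computation. The delicate point — and the step I expect to carry the real content — is the passage from the integral bound to a value of $S$, which genuinely uses that $S$ decreases between its upward jumps: this monotonicity is what makes $\{S>B\}$ forward-absorbing and rules out the oscillation an arbitrary function of small mean could exhibit, so that a single $S_1$-estimate applied to at most two monotone runs suffices. I would be careful to choose $c_H$ large enough only at the very end, so that the two runs cannot tile the whole window.
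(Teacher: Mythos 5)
Your proof is correct and follows essentially the same route as the paper's: both feed the hypotheses into Theorem~\ref{S1} to bound $\int S$ over subintervals of $[t-2H,t-H]$ of length $\asymp H$ by $O(M_+(t)/\log_2 t)$, and then exploit the fact that $S$ decreases between its upward jumps to convert this averaged bound into a pointwise one (the paper does so by bounding $\sup S$ on $[t-2H,t-\tfrac32 H]$ from below and $\inf S$ on $[t-\tfrac32 H,t-H]$ from above and locating a crossing of the band, rather than by your contradiction/stretch-length argument, but the mechanism is the same and your version just makes the implicit structural step explicit). One small correction: $c_H$ is a fixed constant handed to you by the statement (the paper later applies the lemma with $c_H=\tfrac14$), so you cannot choose it large at the end; instead enlarge the constant $C$ in $B=C\,M_+(t)/\log_3 t$, which only affects the implied constant in the conclusion and makes the two stretches of total length $O(H/(Cc_H))$ fail to cover the window.
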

\begin{proof}
we have
\[
\left(\sup_{[t-2H,t-\frac{3}{2}H]}S(u)\right)H\geq \int\limits_{t-2H}^{t-\frac{3}{2}H}S(u)\,du\geqslant -C\frac{M_+(t)}{\log_2 t}
\]
and
\[
\left(\inf_{[t-\frac{3}{2}H,t-H]}S(u)\right)H\leq \int\limits_{t-\frac{3}{2}H}^{t-H}S(u)\,du\leqslant C \frac{M_+  (t)}{\log_2 t}
\]
for some positive constant $C$ and hence there exists $\alpha \in [1,2]$ such that
\[
 S(t-\alpha H)\ll \frac{M_+(t)}{\log_3 t}\,.
 \]
\end{proof}
\begin{thm}\label{Lowerbeta1}
Assume that $M_+(t)$ and $M_-(t)$ satisfy the assumptions of Lem-ma~\ref{DeltaS} and let $c\in(0,1)$ be a fixed constant. Then there exist arbitrary large $T$ and $a\asymp (\log_2 T)^{-1}\log_3 T$ such that
\begin{equation}\label{Boundbeta1bis}
\beta_1^+\geqslant e^{\textstyle{
 cM_+(T+a)\left(1+O\left(\frac{1}{\log_3T}\right)\right)}}\,.
\end{equation}
Moreover, if $M_+(T+a)=A_1(T+a)$ and $K\!\in\left.\left(\frac{1}{4}\theta'(T),\frac{7}{8}\theta'(T)\right]\right.$, then
\begin{equation}\label{Bounde2K}
 e_{2K,n} \leqslant e^{\textstyle{ -\frac{2\pi c K}{\log_3 T}\left(1+O\left(\frac{1}{\log_3T}\right)\right)}}.
\end{equation}
\end{thm}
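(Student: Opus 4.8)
The plan is to select a single pair $(T,a)$ that makes both assertions hold, and to control $\beta_1^+$ through the explicit shape of $\Psi_1^*(a)$. Since $M_+(t)\to\infty$ and $S$ reaches its running maximum at arbitrarily large points, I would take $T+a$ to be such a point with $Z(T+a)\neq0$, so that $S(T+a)\geq cM_+(T+a)$; then Lemma~\ref{DeltaS}, applied at $t=T+a$, furnishes $\alpha\in[1,2]$ with $|S(T+a-\alpha H)|\ll M_+(T+a)/\log_3T$, and I would put $T-a=T+a-\alpha H$. This fixes $2a=\alpha H\asymp(\log_2T)^{-1}\log_3T$, the constant $c_H$ being left free for the second part. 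Writing $x_k=\gamma_k-T$ and $s_k=\sin(\pi x_k/2a)$, Lemma~\ref{meanxk} together with Theorem~\ref{S1} (which yields $M_1(T+a)/a=O(M_+(T+a)/\log_3T)$) gives
\[
\sum_{k=0}^n s_k=S(T+a)+S(T-a)+O\!\left(\frac{M_+(T+a)}{\log_3T}\right)\geq cM_+(T+a)\left(1+O\!\left(\frac{1}{\log_3T}\right)\right).
\]

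For (\ref{Boundbeta1bis}) I would analyse $\beta_1^+=\Psi_1^*(a)\theta'(T)$. Taking $l=1$ in Theorem~\ref{Main} and using that the order-$n$ divided difference of a constant vanishes, I would write, with $\mu_k=1/\prod_{j\neq k}(s_k-s_j)$,
\[
\Psi_1^*(a)=2a\sum_{k=0}^n\mu_k\,\tilde g(s_k),\qquad \tilde g(s)=\frac{(\arcsin s)^2}{2\pi^2}+\frac{\arcsin s}{2\pi}.
\]
The linear term is tuned so that the jump of $\tilde g$ across $(-\infty,-1]$ cancels; hence $\tilde g$ is analytic off the single cut $[1,\infty)$, and deforming the contour form of the divided difference onto that cut gives the exact positive representation
\[
\Psi_1^*(a)=\frac{2a}{\pi^2}\int_1^{\infty}\frac{\log\!\left(x+\sqrt{x^2-1}\right)}{\prod_{k=0}^n(x-s_k)}\,dx,
\]
consistent with Lemma~\ref{Psiprop}. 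The exponential comes from the elementary inequality $-\log(1-s)\geq s$ on $(-1,1)$, which gives $\prod_k(1-s_k)\leq e^{-\sum_k s_k}$: restricting the integral to $[1,1+\epsilon_0]$ with $\epsilon_0\asymp(\sum_k(1-s_k)^{-1})^{-1}$ keeps $\prod_k(x-s_k)\asymp\prod_k(1-s_k)$ there, so that $\Psi_1^*(a)\gg a\,\epsilon_0^{3/2}e^{\sum_k s_k}$. Since $Z(T+a)\neq0$ forces $1-s_n$ to be only polynomially small in $\log T$, the logarithms of $a\theta'(T)$ and $\epsilon_0^{3/2}$ are $O(\log_2T)=o(M_+/\log_3T)$, and absorbing these yields $\beta_1^+\geq e^{cM_+(T+a)(1+O(1/\log_3T))}$.

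For (\ref{Bounde2K}) I would estimate $n$ from (\ref{eq:N}): $n+1=\frac{1}{\pi}(\theta(T+a)-\theta(T-a))+S(T+a)-S(T-a)+O(1)$, so with $S(T+a)\geq cM_+$, $|S(T-a)|$ small, and $M_+(T+a)=A_1(T+a)$ one gets $n+1\geq\frac{2a\theta'(T)}{\pi}+cM_+(1+O(1/\log_3T))$. Fixing $c_H$ so that $a\sim\frac14(\log_2T)^{-1}\log_3T$, the ratio entering $e_{2K,n}$ satisfies $\frac{2a\theta'(T)}{n\pi}\leq1-\frac{\pi c}{\log_3T}(1+O(1/\log_3T))$, whence $\left(\frac{2a\theta'(T)}{n\pi}\right)^{2K}\leq e^{-\frac{2\pi cK}{\log_3T}(1+O(1/\log_3T))}$. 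It remains to absorb the other factors: $c_{2K-1,n}=e^{o(n)}$ by Lemma~\ref{Psinorm} (legitimate since $K\geq n\log n/\log_2n$ here), while $2^{n-1/2}$ and $\min(\log T,3\zeta(\cdots))$ contribute $e^{O(n)}$; because $n\asymp(\log_2T)^{-1}\log_3T\log T$ is of smaller order than $K/\log_3T\asymp\log T/\log_3T$ (as $(\log_3T)^2/\log_2T\to0$), these are dominated by the exponential decay and (\ref{Bounde2K}) follows.

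The main obstacle is the exponential lower bound of the second paragraph: converting the single scalar datum $\sum_k s_k\approx cM_+$ into a genuine exponential lower bound for a divided difference whose precise size depends on the unknown clustering of the nodes $s_k$ near $1$. The delicate point is to show that the polynomial losses---arising from the minimal gap $1-s_n$ and from $\sum_k(1-s_k)^{-1}$---are really $e^{o(M_+)}$ and hence harmless; this is exactly where $Z(T+a)\neq0$ and the size of $a$ are used, and it is also the place where, for the smaller regimes $M_+(T+a)=A_2$ or $A_3$, this crude estimate must be superseded by the minimization problem~(P).
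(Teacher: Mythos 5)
Your choice of $T$ and $a$ (taking $T+a$ at a point where $S$ is within a factor $c$ of its running maximum and $T-a=T+a-\alpha H$ via Lemma~\ref{DeltaS}), your use of Lemma~\ref{meanxk} and Theorem~\ref{S1} to get $\sum_k s_k = cM_+(T+a)\left(1+O\left(\frac{1}{\log_3 T}\right)\right)$, and your treatment of (\ref{Bounde2K}) all coincide with the paper's proof. The divergence --- and the gap --- is in the lower bound for $\beta_1^+$. Your integral representation over the cut $[1,\infty)$ is correct (the jump of $(\arcsin z)^2/(2\pi^2)$ across $(-\infty,-1]$ is indeed cancelled by that of $\arcsin(z)/(2\pi)$, and the contribution at infinity vanishes since $n\geq1$), but the way you extract the exponential from it does not work as stated. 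Your bound $\Psi_1^*(a)\gg a\,\epsilon_0^{3/2}e^{\sum_k s_k}$ carries the factor $\epsilon_0\asymp\bigl(\sum_k(1-s_k)^{-1}\bigr)^{-1}$, and the assertion that ``$Z(T+a)\neq 0$ forces $1-s_n$ to be only polynomially small in $\log T$'' is unfounded: $Z(T+a)\neq0$ gives $1-s_n>0$ and nothing quantitative. Worse, $1-s_n\asymp\bigl((T+a-\gamma_n)/a\bigr)^2$ and $T+a$ is by construction a point where $S$ is near its maximum, i.e.\ a point sitting just to the right of an unusually dense cluster of zeros; there is no reason why $\gamma_n$ should keep a distance $a\,e^{-o(M_+/\log_3 T)}$ from $T+a$, and perturbing $T+a$ to manufacture such a gap changes $S$ by the number of zeros crossed plus $O(\delta\theta'(T))$, which you cannot control without circularity. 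So the ``delicate point'' you flag in your last paragraph is a genuine hole, not a detail to be checked.

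The paper avoids this issue entirely: invoking the argument of Lemma 3.2 of \cite{Blanc1} it uses the bound $\Psi_1(y_0,\ldots,y_n,a)\geq\frac{2a(1+\bar t)^n}{3\pi^2n^2}$ with $\bar t=\frac{1}{n+1}\sum_k\sin\left(\pi\frac{y_k}{2a}\right)$, which depends only on the \emph{mean} of the nodes and is therefore immune to a node lying arbitrarily close to $1$; then $\log\beta_1^+>n\log(1+\bar t)+O(\log_2 T)=n\,\bar t\,(1+O(\bar t))+O(\log_2T)$ gives (\ref{Boundbeta1bis}) directly from Lemma~\ref{meanxk}. Your representation can in fact be repaired to give a bound of exactly this type: instead of localizing to $[1,1+\epsilon_0]$ and using $\prod_k(1-s_k)\leq e^{-\sum_ks_k}$, apply the arithmetic--geometric mean inequality $\prod_k(x-s_k)\leq(x-\bar t)^{n+1}$ (valid for $x\geq1$) inside the integral; a substitution then yields $\Psi_1^*(a)\gg a\,n^{-3/2}(1-\bar t)^{-n}\geq a\,n^{-3/2}(1+\bar t)^{n}$, which involves only $\bar t=O((\log_3T)^{-1})$ and loses only $O(\log_2 T)$ in the logarithm. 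As written, however, your proof of (\ref{Boundbeta1bis}) is incomplete at its central step.
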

\begin{proof}
Let $t$ be sufficiently large, such that $S(t)=cM_+(t)$, and $\alpha$ satisfying the conclusion of Lemma~\ref{DeltaS} for $H=\frac{1}{4}(\log_2 t)^{-1}\log_3 t$. Further let $T$ and $a$ such that $T+a=t$ and $T-a=t-\alpha H$ and hence $a=c_1(\log_2 t)^{-1}\log_3 t$ for some $c_1\in[\frac{1}{8},\frac{1}{4}]$. This choice of $a$ will be used in the proof of (\ref{Bounde2K}). By definition
$
\beta_1^+=\alpha_1^+\theta'(T)=\Psi_1^*(x_{0},\ldots,x_n,a)\theta'(T)
$
where $x_k = \gamma_k -T$
 and proceeding as in the proof of \cite[Lemma 3.2]{Blanc1}, one checks that for pairwise distinct $y_k\!\in(-a,a)$ we have
\[
\Psi_1(y_{0},\ldots,y_n,a)\geqslant\frac{2a(1+\bar{t})^n}{3 \pi^2n^2}
\]
where
\[
\bar{t}=\frac{1}{n+1}\sum_{k=0}^n\sin\left(\pi \frac{y_k}{2a}\right)
\]
which, by continuity, extends to $\Psi_1^*(y_{0},\ldots,y_n,a)$ when the $y_k$ are not distinct.
From (\ref{eq:N}) we deduce that
\begin{equation}\label{mplusn}
n=\frac{2a}{\pi}\theta'(T)\left(1+O\left(\frac{1}{\log_3 T}\right)\right)
\end{equation}
and hence Theorem \ref{S1} and Lemma \ref{meanxk} imply that $\bar{t}=O((\log_3T)^{-1})$ and
\begin{eqnarray*}
\log \beta_1^+&>&n\log(1+\bar{t})+O(\log_2 T)= n\,\bar{t}\,(1+O(\bar{t}))+O(\log_2 T)\\
&=& \left(S(T+a)+S(T-a)+O\left(\frac{M_+(T+a)}{\log_3T}\right)\right)\left(1+O\left(\frac{1}{\log_3T}\right) \right)\\
&=&cM_+(T+a)\left(1+O\left(\frac{1}{\log_3T}\right)\right).
\end{eqnarray*}
By the choice of $a$ and (\ref{mplusn}) we have
\[
K>n\frac{\log n}{\log_2 n}
\] 
and, by Lemma \ref{Psinorm}, we get $\log c_{2K-1,n}=o(n)$ and using
\[
 n\pi=2a\theta'(T)+ \pi cA_1(T+a)\left(1+O\left(\frac{1}{\log_3T}\right)\right) 
\]
we conclude that
\begin{eqnarray*}
\log e_{2K,n}&=& n\log 2 +o(n) -2K\log\frac{n\pi}{2a\theta'(T)}+O(\log_2 T)\\
&\leqslant&n\log 2 -\frac{2\pi c K}{\log_3 T}\left(1+O\left(\frac{1}{\log_3T}\right)\right).\\
\end{eqnarray*}
We complete the proof by noting that $n =O(K(\log_2 T)^{-1}\log_3 T)$.
\end{proof}
Observe that the proof of the lower bound (\ref{Boundbeta1bis}) uses only the mean of $\sin \left(\pi \frac{x_k}{2a} \right)$. We now compute a lower bound for $\beta_1^+$ which takes into account the distribution of zeros.\\
Under the assumptions of Lemma \ref{DeltaS}, there exist, for $c\in \, ]0,1[$, arbitrary large $T$ and $a\asymp (\log_2 T)^{-1}\log_3 T$ such that $S(T+a)=cM_+(T+a)$ and $S(T-a)\ll(\log_3 T)^{-1}M_+(T+a)$. We select such a $T$ and $a$, and as suggested by the computations of \cite{Bober}, we assume, which is not essential, that
\begin{equation}\label{condS}
0\leqslant S(T-a)\leqslant S(\gamma_k)\hspace{3 mm}\mbox{for}\hspace{3 mm}k=0,\ldots,n.
\end{equation}
For ease of notation, we set 
$
 \tau_k=\sin \left(\pi \frac{\gamma_{k}-T}{2a} \right)\,\,\mbox{for}\,\,k=0,\ldots,n.
$
According to the position of $\gamma_{-1}$ with respect to $T-a$ and thanks to (\ref{eq:N}), we have
\[
k+1=\frac{1}{\pi}(\theta(\gamma_{k})-\theta(T-a))+S(\gamma_{k})-S(T-a)-q
\]
where $q=0$ or $1$, for $k=0,\ldots,n$. Therefore
\[
k+1=\frac{1}{\pi}\theta'(T)(\gamma_{k}-(T-a))+S(\gamma_{k})-S(T-a)-q-r
\]
where $0\leqslant r\leqslant\frac{a^2}{T}$ since $\theta'''(t)<0$ for $t\geqslant 1$,
and this implies that
\[
\gamma_{k}-T=-a+\frac{\pi}{\theta'(T)}\left(k+1+S(T-a)-S(\gamma_{k})+q+r\right).
\]
Thanks to (\ref{condS}) we have
\[
-M_+(T+a)\leqslant S(T-a)-S(\gamma_{k})\leqslant 0
\] and since $-1<\tau_k<1$ we get
\begin{equation}\label{eq:tau}
\tau_-(k)\leqslant \tau_k\leqslant \tau_+(k)
\end{equation}
where
\[
\tau_-(k)=-\cos\left(\max\left(\frac{\pi^2}{2a\theta'(T)}\left(k+1-M_+(T+a)\right),0\right)\right)
\]
and
\[
\tau_+(k)=-\cos\left(\min\left(\frac{\pi^2}{2a\theta'(T)}\left(k+2+r\right),\pi\right)\right)
\]
for $k=0,\ldots,n$.
For further use we recall some elementary facts concerning the divided differences.
\begin{lem}\label{2}
Let $I=(-1,1)$, $f\in C^{n+2}(I)$ and let $g$ be the function defined for pairwise distinct numbers $y_{0},\ldots,y_{n}\in I$ by
\[
g(y_{0},\ldots,y_{n})=\sum_{k=0}^{n}\frac{f(y_k)}{\displaystyle \prod_{\genfrac{}{}{0 cm}{} {0\leq j\leq n}{\; j\neq k}}(y_k-y_j)}
\]
and let $g^*$ be the continuous extension of $g$ defined for $y_{0},\ldots, y_{n}\in I$.
Then 
\begin{enumerate}
\item[a)] There exists $\eta=\eta(y_{0},\ldots,y_{n})\in I$ such that
\begin{equation}\label{eq:gp} 
\frac{\partial g^*}{\partial y_i}(y_{0},\ldots,y_{n})=\frac{f^{(n+1)}(\eta)}{(n+1)!}.
\end{equation}
\item[b)]
Let $h$ be the function defined in a neighbourhood of $0$ by 
\[
h(t)=g^*(y_0,\ldots,y_i+t,\ldots,y_j-t,\ldots,y_{n}).
\]
Then there exists $\xi=\xi(y_{0},\ldots,y_{n})\in I$ such that 
\begin{equation}\label{eq:h}
h'(0)=-(y_j-y_i)\frac{f^{(n+2)}(\xi)}{(n+2)!}.
\end{equation}
\end{enumerate}
\end{lem}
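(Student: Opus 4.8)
The plan is to recognize $g^*$ as the (confluent) divided difference $f[y_0,\ldots,y_n]$ and to extract both mean value statements from the Hermite--Genocchi integral representation. For pairwise distinct nodes one has
\[
g^*(y_0,\ldots,y_n)=\int_{\Sigma_n}f^{(n)}\!\left(\sum_{k=0}^n t_k\,y_k\right)dt,
\]
where $\Sigma_n=\{(t_0,\ldots,t_n):t_k\geq 0,\ \sum_k t_k=1\}$ and $dt$ is Lebesgue measure on it. Since $f\in C^{n+2}(I)$ the right-hand side is a $C^2$ function of $(y_0,\ldots,y_n)$ that agrees with $g$ where the $y_k$ are distinct, so it furnishes the continuous extension $g^*$ and also explains why $g^*$ is differentiable in each variable.

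For part (a) I would differentiate under the integral sign to obtain
\[
\frac{\partial g^*}{\partial y_i}(y_0,\ldots,y_n)=\int_{\Sigma_n}t_i\,f^{(n+1)}\!\left(\sum_{k=0}^n t_k\,y_k\right)dt.
\]
Merging the two simplex coordinates attached to a repeated node $y_i$ identifies this integral with the Hermite--Genocchi representation of the confluent divided difference $f[y_0,\ldots,y_i,y_i,\ldots,y_n]$; the normalisations match because $\int_{\Sigma_n}t_i\,dt=1/(n+1)!=\int_{\Sigma_{n+1}}ds$. The integrand is $f^{(n+1)}$ evaluated along the convex hull of $\{y_k\}\subset I$, so the mean value theorem for integrals yields $\eta\in I$ with $\partial g^*/\partial y_i=f^{(n+1)}(\eta)/(n+1)!$, which is (\ref{eq:gp}).

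For part (b) the chain rule gives $h'(0)=\partial_{y_i}g^*-\partial_{y_j}g^*$, hence by the confluent interpretation of (a)
\[
h'(0)=f[y_0,\ldots,y_i,y_i,\ldots,y_n]-f[y_0,\ldots,y_j,y_j,\ldots,y_n].
\]
I would then apply the recursion $f[z_0,\ldots,z_m]=\big(f[z_1,\ldots,z_m]-f[z_0,\ldots,z_{m-1}]\big)/(z_m-z_0)$ to the twice-confluent difference in which both $y_i$ and $y_j$ are repeated, peeling off one copy of $y_i$ at the front and one copy of $y_j$ at the back; this rewrites the bracketed difference as $(y_i-y_j)\,f[y_0,\ldots,y_i,y_i,\ldots,y_j,y_j,\ldots,y_n]$. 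A last Hermite--Genocchi/mean value step applied to this $(n+2)$-nd order difference, whose nodes still lie in $I$, produces $\xi\in I$ with value $f^{(n+2)}(\xi)/(n+2)!$, and therefore $h'(0)=-(y_j-y_i)f^{(n+2)}(\xi)/(n+2)!$, which is (\ref{eq:h}).

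The step needing the most care is the identification of $\partial_{y_i}g^*$ with the confluent divided difference and, in part (b), the bookkeeping of the recursion: one must keep track of exactly which single copies of $y_i$ and $y_j$ are deleted so that the two once-confluent differences reappear with the correct signs and the factor $(y_i-y_j)$ comes out as claimed. The differentiability required to write $\partial_{y_i}g^*$ and $h'(0)$ is guaranteed by $f\in C^{n+2}$, while the convexity of the interval $I$ keeps the intermediate points $\eta$ and $\xi$ inside $I$.
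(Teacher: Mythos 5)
Your proof is correct. Part (a) is essentially the paper's argument: both rest on the Hermite--Genocchi representation of the divided difference (the paper writes it as the iterated integral $\int_0^1 d\tau_1\int_0^{\tau_1}d\tau_2\cdots$ over the ordered simplex, you as an integral over the standard simplex), followed by differentiation under the integral sign and the mean value theorem applied to a non-negative weight of total mass $1/(n+1)!$. For part (b) you genuinely diverge. The paper first reduces to $i=n-1$, $j=n$ by permutation invariance and then integrates by parts in the innermost variable $\tau_n$: since $\tfrac{d}{d\tau_n}f^{(n+1)}(\cdots)=(y_n-y_{n-1})f^{(n+2)}(\cdots)$, this converts the weight $\tau_{n-1}-2\tau_n$ into $-(y_n-y_{n-1})$ times the non-negative weight $\tau_{n-1}\tau_n-\tau_n^2$, whose total mass is $1/(n+2)!$, and the mean value theorem finishes. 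You instead use the chain rule $h'(0)=\partial_{y_i}g^*-\partial_{y_j}g^*$, identify each partial derivative with a once-confluent divided difference via the merging of simplex coordinates, and apply the standard recursion with the extra copy of $y_j$ placed first and the extra copy of $y_i$ placed last, producing $(y_i-y_j)$ times a twice-confluent difference of order $n+2$. Your route avoids both the reduction to adjacent indices and the integration-by-parts computation, at the price of invoking the confluent calculus (the identity $\partial_{y_i}f[y_0,\ldots,y_n]=f[y_0,\ldots,y_i,y_i,\ldots,y_n]$ and the recursion for repeated end nodes); it also tacitly assumes $y_i\neq y_j$ in the recursion step, but when $y_i=y_j$ both sides of (\ref{eq:h}) vanish, so nothing is lost.
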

\begin{proof}
Assertion a) is a consequence of the representation formula 
\[
g(y_{0},\ldots,y_{n})=
\]
\[
\int\limits_0^1\,d\tau_1\,\int\limits_0^{\tau_1}\,d\tau_2\cdots\int\limits_0^{\tau_{n-1}}\,
f^{(n)}\,(y_{0}+\sum_{k=1}^{n}\,\tau_k(y_{k}-y_{k-1}))\, d\tau_{n}\,.
\]
Since divided differences are invariant by permutation it is sufficient to prove the second assertion for $i=n-1$ and $j=n$. By using integration by parts we get
\[
h'(0)=\int\limits_0^1\,d\tau_1\,\int\limits_0^{\tau_1}\,d\tau_2\cdots\int\limits_0^{\tau_{n-1}}\,
f^{(n+1)}\,(y_{0}+\sum_{k=1}^{n}\,\tau_k(y_{k}-y_{k-1}))(\tau_{n-1}-2\tau_{n})\, d\tau_{n}
\]
\[
=-(y_n-y_{n-1})\int\limits_0^1\,d\tau_1\,\int\limits_0^{\tau_1}\,d\tau_2\cdots\int\limits_0^{\tau_{n-1}}\,
f^{(n+2)}\,(y_{0}+\sum_{k=1}^{n}\,\tau_k(y_{k}-y_{k-1}))(\tau_{n-1}\tau_n-\tau_{n}^2)\, d\tau_{n}
\]
and (\ref{eq:h})  is a consequence of the mean value theorem. 
\end{proof}
 Let $I=(-1,1)$ and let $g$ be the function defined for pairwise distinct numbers $y_{0},\ldots,y_{n}\in I$ by
 \begin{equation}\label{diffdiv}
 g(y_{0},\ldots,y_{n})=\sum_{k=0}^{n}\frac{f(y_k)}{\displaystyle \prod_{\genfrac{}{}{0 cm}{} {0\leq j\leq n}{\; j\neq k}}(y_k-y_j)}
 \end{equation}
 where $f(t)= B_{2}\left(\frac{1}{2}+\frac{1}{\pi} \textnormal{Arcsin}\,\sqrt{\frac{1 +t}{2}}\,\right)$ and 
let $g^*$ be the continuous extension of $g$ defined for $y_{0},\ldots, y_{n}\in I$. Consider the problem (P)\,:
\[
\min g^*(y_0,\ldots,y_{n})
\]
subject to
\[
\left\lbrace
\begin{array}{c}
\vspace{3mm}
\tau_0\leqslant y_0\leqslant\ldots\leqslant y_n\leqslant \tau_{n},\\ \vspace{2mm}
\tau_-(k)\leqslant y_k\leqslant \tau_+(k)\,\,\mbox{for}\,\,k=0,\ldots,n,\\
 \displaystyle \sum_{k=0}^{n}y_k=\sum_{k=0}^{n}\tau_k\,.\\
\end{array}
\right.
\]
\begin{lem}\label{opt}
Problem (P) has a unique solution $(\underline{y}_0,\ldots,\underline{y}_{n})$. Further there exist $1\leqslant J\leqslant L\leqslant n-1$ such that $\underline{y}_k=\tau_+(k)$ for $k=0,\ldots,J-1$, $\underline{y}_J=\ldots =\underline{y}_L$ and $\underline{y}_k=\tau_-(k)$ for $k=L+1,\ldots,n$.
\end{lem}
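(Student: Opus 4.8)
The plan is to exploit the fact that $f$ is \emph{absolutely monotonic} on $[-1,1]$ and to combine the resulting strict convexity of $g^*$ with the exchange identity of Lemma~\ref{2}. Writing $B_2(x)=x^2-x+\tfrac16$ one finds
\[
f(t)=\Big(\tfrac1\pi\,\mathrm{Arcsin}\sqrt{\tfrac{1+t}{2}}\Big)^2-\tfrac1{12},
\]
and inserting the classical expansion $(\mathrm{Arcsin}\,w)^2=\tfrac12\sum_{m\geq1}\frac{(2w)^{2m}}{m^2\binom{2m}{m}}$ with $w=\sqrt{(1+t)/2}$ gives
\[
f(t)=-\frac1{12}+\frac1{2\pi^2}\sum_{m\geq1}\frac{2^m}{m^2\binom{2m}{m}}\,(1+t)^m .
\]
Every coefficient of a positive power of $1+t$ is positive and the series converges on $[-1,1]$, so $f^{(m)}(t)>0$ for all $m\geq1$ and all $t\in(-1,1)$; in particular $f^{(n+1)}>0$ and $f^{(n+2)}>0$ on $I$.

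Next I would record that $g^*$ is strictly convex. From the representation used to prove part a) of Lemma~\ref{2},
\[
g^*(y_0,\dots,y_n)=\int_{\Delta}f^{(n)}\Big(\textstyle\sum_{k=0}^n\lambda_k(\tau)\,y_k\Big)\,d\tau ,
\]
where $\Delta=\{1\geq\tau_1\geq\dots\geq\tau_n\geq0\}$ and $(\lambda_0,\dots,\lambda_n)=(1-\tau_1,\tau_1-\tau_2,\dots,\tau_{n-1}-\tau_n,\tau_n)$ are, for each $\tau$, convex weights. These $n+1$ affine functions of $\tau$ are linearly independent, so for any $d\neq0$ the map $s\mapsto g^*(y+sd)$ has second derivative $\int_\Delta f^{(n+2)}(\cdot)\big(\sum_k\lambda_k(\tau)d_k\big)^2\,d\tau>0$. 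Thus $g^*$ is strictly convex on $\mathbb R^{n+1}$. The feasible set of (P) is a nonempty compact polytope (it contains $y=\tau$, by (\ref{eq:tau}) and the monotonicity of the $\tau_\pm(k)$), so a minimiser exists, and strict convexity makes it unique; this settles the first assertion.

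For the shape of $\underline y$ I would use that $g^*$ is Schur-convex: by part b) of Lemma~\ref{2}, moving $y_i$ up and $y_j$ down by a common small amount changes $g^*$ at the rate $-(y_j-y_i)f^{(n+2)}(\xi)/(n+2)!$, which is negative whenever $y_i<y_j$. Hence at the optimum no such sum-preserving ``bringing together'' move is feasible. Applied to the top index of one constancy block of $\underline y$ and the bottom index of the next, feasibility can fail only if the former is pinned at its upper bound or the latter at its lower bound; since $\tau_+$ (resp.\ $\tau_-$) is \emph{strictly} increasing on the range where it is not saturated, a block whose top (resp.\ bottom) index sits on $\tau_+$ (resp.\ $\tau_-$) is forced to be a singleton equal to $\tau_+(k)$ (resp.\ $\tau_-(k)$). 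Calling a block \emph{interior} when neither endpoint is so pinned, the exchange between the top index of one interior block and the bottom index of a later one shows there is at most one interior block; the same exchange, applied to a lower-pinned index preceding an upper-pinned one, shows every upper-pinned singleton precedes every lower-pinned singleton, and that the interior block, if present, lies between the two groups. This produces exactly the asserted pattern $\underline y_k=\tau_+(k)$ for $k<J$, a constant stretch $\underline y_J=\dots=\underline y_L$, and $\underline y_k=\tau_-(k)$ for $k>L$.

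The main obstacle is to show all three pieces are non-empty, i.e.\ $1\leq J\leq L\leq n-1$, since the exchange argument alone does not exclude a configuration with no interior block (a direct jump from an upper to a lower bound leaves no room for an exchange). Here one must use the explicit formulas for $\tau_\pm(k)$ together with the constraint $\sum_k y_k=\sum_k\tau_k$. As the cut $m$ runs from $0$ to $n+1$, the quantity $\sum_{k<m}(\tau_+(k)-\tau_k)-\sum_{k\geq m}(\tau_k-\tau_-(k))$ increases from a non-positive to a non-negative value and in general skips $0$ at every integer, so no pure upper/lower split can meet the sum; this forces a non-degenerate interior block and pins its common value $v$ through the sum. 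That the extreme indices are pinned, $\underline y_0=\tau_+(0)$ and $\underline y_n=\tau_-(n)$, then follows because $\tau_+(0)$ is near $-1$ and $\tau_-(n)$ near $+1$ while the sum constraint forces $v$ strictly between them. I expect this quantitative step, rather than the convexity or the exchange argument, to carry the real weight of the proof.
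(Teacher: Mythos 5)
Your proof is built on the same engine as the paper's: the sum-preserving exchange of Lemma \ref{2}\,b), with its sign controlled by the positivity of $f^{(n+2)}$, forces every index outside one distinguished block to be pinned at $\tau_+(k)$ (before the block) or at $\tau_-(k)$ (after it). Within that common frame you supply two things the paper only cites or asserts. You prove the absolute monotonicity of $f$ explicitly from the series for $(\mathrm{Arcsin}\,w)^2$ --- the paper delegates this to \cite[Lemma 2.4]{Blanc2} --- and your uniqueness argument is genuinely better: from the Hermite--Genocchi representation the second derivative of $s\mapsto g^*(y+sd)$ equals $\int_{\Delta} f^{(n+2)}(\cdot)\big(\sum_k\lambda_k(\tau)d_k\big)^2\,d\tau$, which is strictly positive for $d\neq 0$ because the affine weight function $\tau\mapsto\sum_k\lambda_k(\tau)d_k$ vanishes identically only when $d=0$; hence $g^*$ is strictly convex on $I^{n+1}$ (which contains the feasible set) and the minimiser is unique. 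The paper's ``uniqueness is a consequence of the characterisation of the optimal solutions'' is weaker, since the characterisation by itself does not obviously determine $J$, $L$ and the common value of the middle block.

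The step you single out as the main obstacle --- that $1\leqslant J\leqslant L\leqslant n-1$, i.e.\ that some coordinate lies strictly between its bounds and the extreme coordinates are pinned --- is exactly the step the paper does not prove either: it merely asserts ``there exists $1\leqslant K\leqslant n-1$ such that $\tau_-(K)<\underline{y}_K<\tau_+(K)$''. Your proposed fix, that $\sum_{k<m}(\tau_+(k)-\tau_k)-\sum_{k\geqslant m}(\tau_k-\tau_-(k))$ ``in general skips $0$ at every integer'', is a genericity claim, not a proof, and as written it leaves the possibility of a pure upper/lower split open for particular data. Note, however, that the ordering constraint closes this more directly than the sum constraint: away from the saturated ranges one has $\tau_+(m-1)>\tau_-(m)$ (since the defining formulas compare $m+1+r$ with $m+1-M_+(T+a)$ and $M_+(T+a)+r>0$), so a pure split $\underline{y}_k=\tau_+(k)$ for $k<m$ and $\underline{y}_k=\tau_-(k)$ for $k\geqslant m$ violates $\underline{y}_{m-1}\leqslant\underline{y}_m$ whenever $1\leqslant m\leqslant n$, while $m=0$ and $m=n+1$ fail the sum constraint. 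Carrying that out, including the saturated cases, is what is actually needed to complete both your argument and the paper's.
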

\begin{proof}
Since $-1<\tau_0<\tau_{n}<1$ the function $g^*$ is continuous on the set $[\tau_0,\tau_{n}]^{n+1}$ and Problem (P) has at least a solution $(\underline{y}_0,\ldots,\underline{y}_{n})$. Moreover there exists $1\leqslant K\leqslant n-1$ such that $\tau_-(K)<\underline{y}_K<\tau_+(K)$. Let $J$ and $L$ be respectively the smallest and largest index such that $\underline{y}_k=\underline{y}_K$. Assume there exist indices $k\geqslant L+1$ such that $\tau_-(k)<\underline{y}_k$ and let $j$ be the smallest of these indices. Hence, for $t$ sufficiently small $(\underline{y}_0,\ldots,\underline{y}_L+t,\ldots,\underline{y}_j-t,\ldots,\underline{y}_{n})$ is an  admissible solution and thanks to (\ref{eq:h}) and \cite[Lemma 2.4]{Blanc2}  the function $h$ defined by
\[
h(t)=g^*(\underline{y}_0,\ldots,\underline{y}_L+t,\ldots,\underline{y}_j-t,\ldots,\underline{y}_{n})
\]
satisfies
\[
h'(0)=-(\underline{y}_j-\underline{y}_L)\frac{f^{(n+2)}(\xi)}{(n+2)!}<0.
\]
This is a contradiction and therefore  $\underline{y}_k=\tau_-(k)$ for $k=L+1,\ldots,n$. A very similar argument shows that $\underline{y}_k=\tau_+(k)$ for $k=0,\ldots,J-1$. The uniqueness is a consequence of the characterisation of the optimal solutions.
\end{proof}
 We are now able to compute a lower bound for $\beta_{1}^+$. By continuity, we can assume that the zeros are distinct. We have 
 \begin{eqnarray*}
 \beta_{1}^+&=& \Psi_{1}^*(x_{0},\ldots,x_n,a)\theta'(T)\\
 &=&4a\left(\sum_{k=0}^{n}\,\mu_k B_{2}\left(\frac{3}{4}+\frac{x_k}{4a}\right)\right)\theta'(T)
  \end{eqnarray*}
where we have used the fact that $B_{2}(\frac{1}{2}+x)$ is even, and the identity
  \[
   \frac{3}{4}+\frac{1}{2\pi}\mbox{Arcsin}\,t=\frac{1}{2}+
   \frac{1}{\pi}\mbox{Arcsin}\,\sqrt{\frac{1+ t}{2}}\hspace{2 mm} \mbox{for} \hspace{2 mm} t\in[-1,1]
   \]
 leads to
 \[
  \beta_1^+=4a\left(\sum_{k=0}^{n}\frac{f(\tau_k)}{\displaystyle \prod_{\genfrac{}{}{0 cm}{} {0\leq j\leq n}{\; j\neq k}}(\tau_k-\tau_j)}\right)\theta'(T)\geqslant 4a\,g^*(\underline{y}_0,\ldots,\underline{y}_{n})\theta'(T)
 \]
where $(\underline{y}_0,\ldots,\underline{y}_{n})$ is the solution of Problem (P). Note that this solution depends on $c$, $T$, $a$, $M_+(T+a)$,  $\sum_{k=0}^{n}\tau_k $ and $n$. Thanks to (\ref{eq:N}) and Lemma \ref{meanxk} the lower bound takes the form
\begin{equation}\label{fin}
\beta_1^+\geqslant \phi_c(T,a,M_+(T+a),r_1(T,a),r_2(T,a))
\end{equation}
where $r_1(T,a)$ and $r_2(T,a)$ are $O((\log_3 T)^{-1})$ and to get an approximation for very large $T$, it makes sense to put $r_1=r_2=0$ in (\ref{fin}).
To compute numerically a lower bound for $\phi_c(T,a,M_+(T+a),0,0)$, we first solve Problem (P) and thanks to (\ref{eq:gp}) and \cite[Lemma 2.4]{Blanc2} we can slightly decrease the $\underline{y}_k$ to make them distinct and we use relation (\ref{diffdiv}). To conclude we present an argument which shows why $\vert d_{2k-1}^+\vert$ is very probably small with respect to $d_1^+$ if $d_{2k-1}^+$ is negative. Let 
\[
s_n(t)=-\frac{2}{\sqrt{n}}(\theta'(t)-\log n)\sin(\theta(t)-t\log n)
\]
so that 
\[
Z'(t)=\sum_{1\leq n \leq \sqrt{\frac{t}{2\pi}}}s_n(t)+O\left(t^{-\frac{1}{4}}\theta'(t)^2\right).
\]
For $T$ large, $0<a\ll 1$ and $K\!\in\!\!\left.\left(\frac{1}{4}\theta'(T),\frac{7}{8}\theta'(T)\right]\right.$ we have
\[
d_{2k-1}^+=\frac{1}{\theta'(T)^{2k-1}}\sum_{1\leq n \leq \sqrt{\frac{T+a}{2\pi}}}(\theta'(T+a)-\log n)^{(2k-2)}s_n(T+a)+O\left(T^{-\frac{1}{50}}{\theta'(T)}\right)
\]
and by partial summation
\[
d_{2k-1}^+\geqslant\left(1+O\left(\frac{a}{T}\right)\right)\frac{1}{\theta'(T)}\left(\min_{1\leqslant m\leqslant \sqrt{\frac{T+a}{2\pi}}}\;\;\sum_{n=1}^ms_n(T+a)\right)+O\left(T^{-\frac{1}{50}}{\theta'(T)}\right).
\]
Since $Z'(T+a)$ is very large, it is difficult to imagine that there exists $m$ such that  $\sum_{n=1}^ms_n(T+a)$ is negative and large in absolute value.


\begin{thebibliography}{00}
\bibitem{Blanc1}
Ph.\,Blanc, \emph{A conditional optimal upper bound for the argument of the Riemann zeta function on the critical line}, Proc. Steklov Inst. Math., 296 (Suppl 2), (2017), 18-28.
\bibitem{Blanc2}
Ph. Blanc, \emph{An unexpected property of odd order derivatives of Hardy's function},
     Publications de l'Institut Math\' ematique (Beograd),
     Vol. 95 [109], (2014), 173-188.
\bibitem{Blanc3}
Ph.\,Blanc, \emph{Optimal upper bound for the k-th  derivative of Hardy's function},
J. Number 154, (2015), 105-117.
\bibitem{Bober}
J.W. Bober, G.A. Hiary, \emph{New Computations of the Riemann Zeta Function on the Critical Line}, Experimental Mathematics, published online: 14 Oct 2016.
\bibitem{Borwein}P. Borwein, S. Choi, B.~Rooney, A. Weirathmueller (Eds), \emph{The Riemann Hypothesis : A~Resource for the Afficionado and Virtuoso Alike}, Soci\'et\'e math\'ematique du Canada, 2008.
\bibitem{Bondarenko}
A. Bondarenko, K. Seip,\emph{ Large greatest common divisor sums and extreme values of the Riemann zeta function}, Duke Math. J., Advance Publication, 26 January 2017.
\bibitem{Carneiro}
E. Carneiro, V. Chandee, M.B. Milinovich, \emph{Bounding $S(t)$ and $S_1(t)$ on the Riemann hypothesis}, Math. Ann.,
Vol. 356 [3], (2013), 939-968. 
\bibitem{Chandee} V. Chandee, K. Soundararajan,
\emph{Bounding $\vert \zeta(\frac{1}{2}+it)\vert$ on the Riemann hypothesis}, Bull. Lond. Math. Soc. 43, (2011), 243-250.
\bibitem{Edwards}
H.M. Edwards,\emph{ Riemann's Zeta Function}, Academic Press, 1974.
\bibitem{Farmer}
 D.W.  Farmer,  S.M.  Gonek, C.P.  Hughes,
\emph{The maximum size of L-functions}, J. Reine Angew. Math. 609, (2007), 215-236.
\bibitem{Hiary}
G.A. Hiary, \emph{Description of the algorithm and some numerical results}, https://people.math.osu.edu/hiary.1/fastmethods.html
\bibitem{Ivic}
A. Ivi\'c, \emph{ The Theory of Hardy's Z-function}, Cambridge University Press, 2013 (Cambridge tracts in mathematics 196).
\bibitem{Montgomery} 
H. L. Montgomery,\emph{ Extreme values of the Riemann zeta function}, Comm. Math. Helv. \textbf{52}, (1977), 511-518.
\bibitem{Titchmarsh}
E.C. Titchmarsh, The Theory of the Riemann Zeta-Function, The Clarendon Press, 1986.
\end{thebibliography}
\end{document}